\definecolor{refkey}{rgb}{0,0,1}
\definecolor{labelkey}{rgb}{1,0,0}
\numberwithin{equation}{section}
\newtheorem{theorem}{Theorem}[section]
\newtheorem{proposition}[theorem]{Proposition}
\newtheorem{lemma}[theorem]{Lemma}
\newtheorem{corollary}[theorem]{Corollary}
\newtheorem{Definition}[theorem]{Definition}
\newenvironment{definition}{\begin{Definition}\rm}{\end{Definition}}
\newtheorem{Remark}[theorem]{Remark}
\newenvironment{remark}{\begin{Remark}\rm}{\end{Remark}}
\newtheorem{Example}[theorem]{Example}
\newtheorem{RHproblem}[theorem]{RH problem}
\newcommand{\C}{\mathbb{C}}
\newcommand{\R}{\mathbb{R}}
\newcommand{\MM}{\mathcal M}
\newcommand{\PP}{\mathcal P}
\renewcommand{\tilde}{\widetilde}
\begin{document}
\title{A large deviation principle for weighted Riesz interactions}
\author{Tom Bloom, Norman Levenberg and Franck Wielonsky}
\maketitle 

\begin{abstract}
We prove a large deviation principle for the sequence of push-forwards of empirical measures in the setting of Riesz potential interactions on compact subsets $K$ in $\R^d$ with continuous external fields. Our results are valid for base measures on $K$ satisfying a strong Bernstein-Markov type property for Riesz potentials. Furthermore, we give sufficient conditions on $K$  (which are satisfied if $K$ is a smooth submanifold) so that a measure on $K$ which satisfies a mass-density condition will also satisfy this strong Bernstein-Markov property.
\end{abstract}

\section{Introduction} Fix a positive integer $d>2$. Let $W(y)=\frac{1}{|y|^{\alpha}}$ where $y=(y_1,...,y_d)\in \R^d$, $|y|^2=\sum_{j=1}^dy_j^2$, and $0<\alpha <d$. For $K\subset \R^d$ a compact set of positive Riesz $\alpha-$capacity and $Q:K\to \R$ continuous, we consider the ensemble of probability measures $Prob_n$ on $K^n$:
\begin{equation}\label{probnint} Prob_n:=\frac{1}{Z_n} \exp {\bigl[-\sum_{1\leq i\neq j\leq n}W(x_i-x_j)\bigr]} \cdot \exp{\bigl[-2n\sum_{j=1}^nQ(x_j)\bigr]}d\nu(x_1)\cdots d\nu(x_n)\end{equation}
where $d\nu$ is a measure on $K$ and $Z_n$ is a normalizing constant. Our main result, stated at the end of the introduction, is a large deviation principle for the sequence $\{\sigma_n=(j_n)_*(Prob_n)\}$ of probability measures on $\mathcal M(K)$, the space of probability measures on $K$, where $j_n:  K^{n} \to \mathcal M(K)$ is the empirical measure map $j_n(x_1,...,x_{n})=\frac{1}{n}\sum_{j=1}^{n} \delta_{x_j}$. Weighted Riesz interactions have been studied by many authors, e.g., \cite{CGZ} and \cite{LS}, but generally the situation considered is $K=\R^d$ and $d\nu$ is Lebesgue measure (and $Q(x)$ satisfies a growth condition as $|x|\to \infty$). 

In this paper, we follow the technique utilized, e.g., in \cite{VELD} and \cite{LPLD}. We first discuss weighted Riesz potential-theoretic notions in the next section such as the weighted Riesz energy functional $I^Q$. This entails a weighted Riesz energy minimization problem $\inf_{\mu\in \mathcal M(K)} I^Q(\mu)$ with minimizer $\mu_{K,Q}$ and a corresponding discretization. Section 3 forms the heart of the paper; there we prove a Bernstein-type estimate (Proposition \ref{key}) on ``polynomial-like'' functions arising from our discretization process. This leads to a sufficient mass-density condition on a measure $\mu$ on certain compact sets $K\subset \R^d$ so that we have a good comparability between supremum norms and $L^1(\mu)$ norms of weighted versions of such functions (Theorem \ref{bmq} on strong Bernstein-Markov measures for Riesz potentials on $K$; see Remark \ref{strongBM} for this definition). With these ingredients in hand, the consequences, such as one-point correlation asymptotics (Corollary \ref{417}) and a large deviation principle, follow: 

\begin{theorem} Let $\nu$ be a strong Bernstein-Markov measure for $K$ and $Q$ continuous on $K$. The sequence $\{\sigma_n=(j_n)_*(Prob_n)\}$ of probability measures on $\mathcal M(K)$ satisfies a large deviation principle with speed $n^{2}$ and good rate function $\mathcal I:=\mathcal I_{K,Q}$ where, for $\mu \in \mathcal M(K)$,
\begin{equation*}%\label{ratefcnlform}
\mathcal I(\mu)=I^Q(\mu)-I^Q(\mu_{K,Q}).
\end{equation*}
%see (\ref{w=j=i}).
 \end{theorem}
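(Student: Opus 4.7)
The plan is to establish the LDP by combining the standard large-deviation machinery for Coulomb-type ensembles (as in \cite{VELD,LPLD}) with the strong Bernstein-Markov property. Since $K$ is compact, $\mathcal M(K)$ is a compact Polish space in the weak-$*$ topology, exponential tightness is automatic, and it suffices to verify the local LDP
\[
\lim_{\delta \downarrow 0}\limsup_n \frac{1}{n^2}\log \sigma_n(B(\mu,\delta)) \;=\; \lim_{\delta \downarrow 0}\liminf_n \frac{1}{n^2}\log \sigma_n(B(\mu,\delta)) \;=\; -\mathcal I(\mu)
\]
for each $\mu \in \mathcal M(K)$, where $B(\mu,\delta)$ denotes a weak-$*$ ball. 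That $\mathcal I$ is a good rate function is automatic from lower semicontinuity of $I^Q$ on the compact set $\mathcal M(K)$, together with the minimization property of $\mu_{K,Q}$ recalled in Section~2.

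The central preliminary is the partition-function asymptotic
\[
\lim_n \frac{1}{n^2}\log Z_n = -I^Q(\mu_{K,Q}).
\]
Rewriting the integrand in \eqref{probnint} as $\exp[-n^2 J_n(x_1,\dots,x_n)]$ with $J_n$ the natural discretization of the weighted Riesz energy of the empirical measure, the upper bound $\limsup \frac{1}{n^2}\log Z_n \le -I^Q(\mu_{K,Q})$ comes from truncating the Riesz kernel at level $M$ to obtain a weak-$*$ continuous energy $I^Q_M$, applying Fatou and weak-$*$ compactness, then sending $M\to\infty$ by monotone convergence. For the matching lower bound I would use the weighted Fekete-like arrays produced by the discretization in Section~2, which realize $I^Q(\mu_{K,Q})$ in supremum norm; the strong Bernstein-Markov property for $\nu$ (Theorem~\ref{bmq}) converts this supremum-norm estimate into the required $L^1(\nu^{\otimes n})$ lower bound, because sup-norm and $L^1(\nu)$ comparability of the ``polynomial-like'' weighted integrands holds up to a subexponential factor.

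The local LDP then splits into a routine upper bound and a more delicate lower bound. The upper bound is obtained by repeating the kernel-truncation argument above but restricted to configurations with $j_n \in B(\mu,\delta)$: on this neighborhood the continuous functional $I^Q_M$ is at least $I^Q_M(\mu)-\eta$ for $\delta$ small, combining with the asymptotic for $Z_n$ yields the $\limsup$ bound $-I^Q_M(\mu)+I^Q(\mu_{K,Q})$, and letting $\delta\to 0$ and $M\to\infty$ produces $-\mathcal I(\mu)$. The continuity of $Q$ absorbs the external-field contribution uniformly via Riemann-sum approximation of $\int Q\,d\mu$.

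The hardest step is the matching lower bound. Given a target $\mu$ with $I^Q(\mu)<\infty$, I would regularize by convolution $\mu_\epsilon:=\mu\ast\chi_\epsilon$ so that $I^Q(\mu_\epsilon)\to I^Q(\mu)$ and its Riesz potential is bounded, then discretize $\mu_\epsilon$ by well-separated points $y_1,\dots,y_n$ and integrate over small product balls around them. The main obstacle --- and precisely where Theorem~\ref{bmq} becomes indispensable --- is to lower-bound the $\nu^{\otimes n}$-mass of such a product neighborhood while simultaneously keeping the Riesz interaction below $n^2 I^Q(\mu)+o(n^2)$: the singular kernel amplifies near-collisions, and one needs the strong Bernstein-Markov inequality together with Proposition~\ref{key} to dominate the resulting cross-term losses after taking logarithms. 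Passing $\delta\downarrow 0$, then $\epsilon\downarrow 0$, concludes the local LDP and hence the theorem.
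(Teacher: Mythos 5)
Your overall architecture (local LDP plus exponential tightness, with the partition-function asymptotic $\lim n^{-2}\log Z_n=-I^Q(\mu_{K,Q})$ as the central input) is reasonable, and your upper bound via kernel truncation is essentially what the paper does through Theorem~\ref{sec3}. The gap is in the lower bound, and it is not a small one. First, regularizing by convolution $\mu_\epsilon=\mu\ast\chi_\epsilon$ pushes the measure off $K$ into an $\epsilon$-neighborhood, so $\mu_\epsilon\notin\MM(K)$ and $Q$ is not even defined on its support; restricting back to $K$ and renormalizing destroys the bounded-potential property you were trying to create. Second, and more fundamentally, your plan to ``discretize $\mu_\epsilon$ by well-separated points and integrate over small product balls'' requires a lower bound on $\nu(B(y_j,r))$, i.e.\ a mass-density estimate for $\nu$. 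But the hypothesis of Theorem~\ref{ldp} is only that $\nu$ is a strong Bernstein-Markov measure: the mass-density condition \eqref{mass2} is a \emph{sufficient} condition for strong BM (that is the content of Theorem~\ref{bmq}), not a consequence of it. The strong BM inequality \eqref{BMQ} compares $\|f_n^Q\|_K$ with $\int f_n^Q\,d\nu$ for the specific class $\PP_n^Q$; it does not hand you $\nu$-volume of balls, and neither does Proposition~\ref{key}. So invoking Theorem~\ref{bmq} and Proposition~\ref{key} at that point is circular.

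This is exactly why the paper takes a different route for the lower bound. It never localizes $\nu$ at all. Instead, Lemma~\ref{lemma-scal} uses Lusin's theorem to approximate any finite-energy $\tau\in\MM(K)$ by measures $\tau_m$ supported on compact subsets $K_m\subset K$ with $I(\tau_m)\to I(\tau)$, and realizes each $\tau_m$ as a weighted equilibrium measure $\tau_{K,Q_m}$ for a \emph{continuous} weight $Q_m$. One then works with the $\overline J^Q,\underline J^Q$ functionals of Definition~\ref{jwmuq}: the BM inequality feeds into Proposition~\ref{znasymp} to give the $Z_n$ asymptotic, Lemma~\ref{lem-J-JQ} reduces the weighted case to the unweighted one, the approximation lemma propagates the known lower bound at equilibrium measures to arbitrary finite-energy measures (Theorem~\ref{rel-J-E}), and the abstract criterion of Proposition~\ref{dzprop1} (Theorem~4.1.11 of \cite{DZ}) converts equality of the $\overline J$ and $\underline J$ functionals into the LDP. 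To repair your proposal you would need either to assume the stronger mass-density hypothesis \eqref{mass2} (thereby proving less than the theorem claims), or to replace the convolution-plus-balls step with an approximation scheme that stays inside $K$ and uses only the $L^1$-vs-sup comparison of \eqref{BMQ} --- which is precisely what Lemma~\ref{lemma-scal} and the $J$-functional machinery accomplish.
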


\section{Riesz potential theory}

Let $K\subset \R^d$ be compact and let $\mathcal M(K)$ be the set of probability measures on $K$ endowed with the topology of weak convergence. Fix $0<\alpha <d$. We consider the Riesz energy minimization problem:
$$\inf_{\mu\in \mathcal M(K)}I(\mu)$$
where
$$I(\mu):=\int_K \int_K  \frac{1}{|x-y|^{\alpha}}d\mu(x) d\mu(y)$$ 
is the Riesz energy of $\mu$. We will restrict to $0<\alpha <d$ for the rest of the paper. If there exists $\mu \in \mathcal M(K)$ with $I(\mu)<\infty$ we say $K$ has {\it positive Riesz $\alpha-$capacity} (and henceforth we drop the ``$\alpha$''). We remark that if one considers the cone ${\mathcal M}_+$ of all positive measures on $\R^d$ (not necessarily with compact support), it is known (cf., \cite[Chapter I]{La}, \cite{CGZ}) that 
\begin{enumerate}
\item for $\mu \in {\mathcal M}_+, \ I(\mu)\geq 0$;
\item $I(\mu)=0$ if and only if $\mu=0$;
\item $\mu \to I(\mu)$ is strictly convex on $\{\mu\in {\mathcal M}_+: I(\mu)<\infty\}$.
\end{enumerate}

\noindent Note that Landkof \cite{La} replaces our $\alpha$ with $d-\alpha$.

\begin{remark}\label{future} We add for future use that 2. holds for signed measures $\mu=\mu_1-\mu_2$ with $\mu(K)=0$ when the mixed energy $I(\mu_1,\mu_2):=\int_K \int_K  \frac{1}{|x-y|^{\alpha}}d\mu_1(x) d\mu_2(y)$ is finite; this is in Theorem 1.15, p. 79 of \cite{La}.\end{remark} 

We also consider a weighted Riesz energy minimization problem. Given a compact set $K$ of positive Riesz capacity, and a lower semicontinuous function $Q$ on $K$ with $\{x\in K:Q(x)<\infty\}$ of positive Riesz capacity (we write $Q\in \mathcal A(K)$), we consider
$$\inf_{\mu\in \mathcal M(K)} I^Q(\mu)$$
where
$$I^Q(\mu):=I(\mu)+2\int_KQ(x)d\mu(x):=\int_K \int_K  \frac{1}{|x-y|^{\alpha}}d\mu(x) d\mu(y)+2\int_KQ(x)d\mu(x).$$
In later portions of this paper, we will restrict to $Q\in C(K)$ (continuous functions on $K$).

\begin{remark} In \cite{CGZ} the authors consider the situation where $K=\R^d$ and $Q(x)$ satisfies a growth condition as $|x|\to \infty$. Their Theorem 1.2 gives general results, in this setting, for the weighted energy minimization problems, while their Theorem 1.1 is a large deviation principle using  $Prob_n$ measures as in (\ref{probnint}) which are taken with respect to Lebesgue measure on $\R^d$. We will allow general (possibly singular) measures $\nu$ in (\ref{probnint}) for our large deviation principle, Theorem \ref{ldp}. See also \cite{LS} for further results. 

\end{remark}

We define the Riesz potential associated to a positive measure $\mu$ on $K$:
$$U^{\mu}(x):=\int_K  \frac{1}{|x-y|^{\alpha}}d\mu(y).$$ 
The following properties hold:
\begin{enumerate}
\item The Riesz potential is a lower semicontinuous function on $\R^d$. It is superharmonic in $\R^{d}$ for $0<\alpha\leq d-2$ and subharmonic outside $K$ for $d-2<\alpha$, (\cite[Theorem I.1.4 p. 66]{La}). 
\item For $d-2< \alpha <d$, we have a {\it domination principle} (\cite{La}, Theorem 1.2.9, p. 115): for $\mu$ a measure whose potential $U^{\mu}$ is finite $\mu-$a.e., and $u$ a superharmonic function, if the inequality $U^{\mu}\leq u$ holds $\mu-$a.e., then it holds everywhere. 
\item Also, for $d-2\leq \alpha <d$, we have a {\it maximum principle} (\cite{La}, Theorem 1.10, p. 71): for $\mu$ a measure with $U^{\mu}\leq M$ $\mu-$a.e., this estimate holds everywhere. 
\item There is a {\it weak maximum principle} (\cite{La}, Theorem 1.5, p. 66): for all $0<\alpha <d$, given $\mu$ a measure with $U^{\mu}\leq M$ on supp$(\mu)$, we have $U^{\mu}\leq 2^{\alpha}M$ on $\R^d$. 
\item This last property is sufficient to prove a {\it continuity property} of Riesz potentials (\cite{La}, Theorem 1.7, p. 69): for all $0<\alpha <d$, given $\mu$ a measure with $U^{\mu}$ continuous on supp$(\mu)$, we have $U^{\mu}$ is continuous on $\R^d$.   
\end{enumerate}
Throughout, unless otherwise specified, we assume $0<\alpha<d$.
Following the arguments on pp. 27-33 in \cite{ST} for weighted logarithmic potential theory (or for $K=\R^d$ in \cite{CGZ}) we have the following.

\begin{theorem}\label{frost}
For $K\subset \R^d$ compact and of positive Riesz capacity, and for $Q\in \mathcal A(K)$, 
\begin{enumerate}[noitemsep,nolistsep]
\item $V_w:=\inf_{\mu\in \mathcal M(K)}I^Q(\mu)$ is finite;
\item there exists a unique weighted equilibrium measure $\mu_{K,Q} \in \mathcal M(K)$ with $I^Q(\mu_{K,Q})=V_w$;
\item the support $S_w:=$supp$(\mu_{K,Q})$ is contained in $\{x\in K: Q(x) <\infty\}$ and $S_w$ is of positive Riesz capacity;
\item if we let $F_w:=V_w -\int_K Q(x)d\mu_{K,Q}(x)$, then
\begin{align*}
U^{\mu_{K,Q}}(x)+Q(x) & \geq F_w\text{ on }K\setminus P\text{ where }P
\text{ is of zero Riesz capacity (possibly empty);}\\
U^{\mu_{K,Q}}(x)+Q(x) & \leq F_w\text{ for all }x\in S_w.
\end{align*}
\end{enumerate}
\end{theorem}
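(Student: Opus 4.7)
The approach mirrors the Frostman--Saff--Totik program for weighted logarithmic potentials in \cite{ST}, adapting each step to the Riesz kernel $|x-y|^{-\alpha}$ using the positivity, lower semicontinuity, and strict convexity properties already recorded in this section. For (1), lsc of $Q$ on the compact set $K$ gives $V_w \geq 2\min_K Q > -\infty$; for the upper bound, $\{Q<\infty\}$ has positive Riesz capacity, so countable subadditivity produces some $M_0$ with $\{Q \leq M_0\}$ of positive capacity (this set is closed in $K$ by lsc of $Q$, hence compact), which supports a finite-energy probability measure $\mu_0$ and yields $V_w \leq I^Q(\mu_0) \leq I(\mu_0) + 2M_0 < \infty$. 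For (2), $\MM(K)$ is weak-$*$ compact and $\mu \mapsto I(\mu)$ is weakly lsc (by monotone approximation of $|x-y|^{-\alpha}$ from below by bounded continuous kernels), as is $\mu \mapsto \int Q\, d\mu$; a minimizer $\mu_{K,Q}$ is produced as a weak-$*$ cluster point of a minimizing sequence, and uniqueness follows from strict convexity of $I$ on finite-energy measures (property 3 above) combined with linearity of the $Q$-integral. For (3), if some $x_0 \in S_w$ had $Q(x_0) = \infty$, lsc of $Q$ and $\mu_{K,Q}(U) > 0$ on each neighborhood $U$ of $x_0$ would force $\int Q\, d\mu_{K,Q} = \infty$, contradicting $V_w < \infty$; positive capacity of $S_w$ is immediate from the presence of the finite-energy probability measure $\mu_{K,Q}$ on it.

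For (4), set $w := U^{\mu_{K,Q}} + Q$, so that $\int w \, d\mu_{K,Q} = I(\mu_{K,Q}) + \int Q \, d\mu_{K,Q} = V_w - \int Q \, d\mu_{K,Q} = F_w$. For any $\sigma \in \MM(K)$ with $I^Q(\sigma) < \infty$, the convex combination $\mu_t := (1-t)\mu_{K,Q} + t\sigma$ satisfies
\[
I^Q(\mu_t) - V_w = 2t\Bigl[\int w\, d\sigma - F_w\Bigr] + t^2\, I(\sigma - \mu_{K,Q}),
\]
where every mixed energy is finite (Cauchy--Schwarz, using Remark \ref{future}); minimality at $t = 0$ forces $\int w\, d\sigma \geq F_w$. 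I would apply this to probability measures supported on the closed set $E_\epsilon := \{w \leq F_w - \epsilon\} \cap K$ --- noting that $U^{\mu_{K,Q}} \geq 0$ bounds $Q \leq F_w - \epsilon$ on $E_\epsilon$, so $I(\sigma) < \infty$ automatically gives $I^Q(\sigma) < \infty$ --- and thereby rule out positive capacity of each $E_\epsilon$, and therefore of $\{w < F_w\} \cap K$, producing $w \geq F_w$ quasi-everywhere with $P$ the polar exceptional set. Since $\mu_{K,Q}$ has finite energy, $\mu_{K,Q}(P) = 0$; combining $w \geq F_w$ $\mu_{K,Q}$-a.e.\ with $\int w\, d\mu_{K,Q} = F_w$ yields $w = F_w$ $\mu_{K,Q}$-a.e. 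Lsc of $w$ then prevents any $x_0 \in S_w$ from having $w(x_0) > F_w$, since such a strict inequality would persist on an open neighborhood of positive $\mu_{K,Q}$-mass, contradicting the $\mu_{K,Q}$-a.e.\ equality.

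The main technical point is step (4): the variational expansion is only legitimate when every mixed energy is finite, so one must check that the candidate ``bad'' sublevel sets $E_\epsilon$ genuinely support test measures with $I^Q(\sigma) < \infty$. Positivity of the Riesz kernel makes this automatic --- a mild but crucial convenience relative to the logarithmic case, where $Q$ must be controlled by external growth estimates. Everything else is routine weak-$*$ compactness, lower-semicontinuity, and strict-convexity reasoning.
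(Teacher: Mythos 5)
Your plan for items (1), (2), and (4) is sound and matches the Saff--Totik variational argument the paper invokes: for (1) lower semicontinuity of $Q$ on compact $K$ plus countable subadditivity of Riesz capacity to find a finite-energy test measure; for (2) weak-$*$ compactness, lower semicontinuity of $I^Q$ (via monotone approximation by bounded continuous kernels), and strict convexity; and for (4) the expansion $I^Q((1-t)\mu_{K,Q}+t\sigma)-V_w = 2t\bigl[\int w\,d\sigma - F_w\bigr]+t^2 I(\sigma-\mu_{K,Q})$ together with the observation that $E_\epsilon = \{w\le F_w-\epsilon\}\cap K$ must have zero capacity, and the $\mu_{K,Q}$-a.e.\ argument for the upper bound on $S_w$.

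There is, however, a genuine gap in your treatment of item (3). You claim that $x_0\in S_w$ with $Q(x_0)=\infty$ would force $\int Q\,d\mu_{K,Q}=\infty$ because every neighborhood of $x_0$ carries positive $\mu_{K,Q}$-mass. That implication fails: writing $U_M=\{Q>M\}$, you only get $\int Q\,d\mu_{K,Q}\ge M\,\mu_{K,Q}(U_M)$, and $\mu_{K,Q}(U_M)\downarrow\mu_{K,Q}(\{Q=\infty\})$, which can be zero. A measure can perfectly well have a point of its support sitting inside a $G_\delta$ set where $Q=\infty$ while $\int Q\,d\mu_{K,Q}$ stays finite (think of a density decaying rapidly at $x_0$). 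The correct route in the Riesz setting is to derive (3) \emph{from} (4): on $S_w$ you have $U^{\mu_{K,Q}}+Q\le F_w$, and since the Riesz kernel $|x-y|^{-\alpha}$ is nonnegative, $U^{\mu_{K,Q}}\ge 0$, hence $Q\le F_w<\infty$ on $S_w$. This is exactly the positivity convenience you invoke for (4); you should also use it for (3), and reorder accordingly so the containment in $\{Q<\infty\}$ is a corollary of the Frostman inequality rather than an independent preliminary. The positive-capacity statement in (3) is fine as you argue: $\mu_{K,Q}$ is a finite-energy probability measure carried by $S_w$.
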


\begin{remark} In the proof of the Frostman-type property 4. in \cite{ST}, one simply replaces ``q.e.'' -- off of a set of positive logarithmic capacity in $\C$ -- by ``off of a set of zero Riesz capacity'' as the essential property used is the existence of a measure of finite logarithmic energy on a compact subset of a set of positive logarithmic capacity in $\C$. 
\end{remark}

\begin{remark} \label{needed} For $K\subset \R^d$ compact and of positive Riesz capacity, if $\mu\in \mathcal M(K)$ with $I(\mu)<\infty$, one can consider a weighted energy minimization problem with the {\it upper semicontinuous} weight $Q=-U^{\mu}$. Following the proof of Lemma 5.1 of \cite{LPLD}, the minimum is attained (uniquely) by the measure $\mu$; i.e., 
$$I(\mu)+2\int_KQ(x)d\mu(x)\leq I(\nu)+2\int_KQ(x)d\nu(x) \ \hbox{for all} \ \nu\in \mathcal M(K)$$
with equality if and only if $\nu=\mu$. This uses Remark \ref{future}.

\end{remark}

The ``converse'' to 4. of Theorem \ref{frost} holds as well. This is stated/proved in \cite{CGZ} in their setting (Theorem 1.2 (1.10) and (1.11)). 
\begin{proposition}\label{frostchar} Let $K\subset \R^d$ be compact and of positive Riesz capacity and let $Q\in \mathcal A(K)$. For a measure $\mu\in \mathcal M(K)$, if there exists a constant $C$ such that
$$U^{\mu}(x)+Q(x)  \geq C\text{ on }K\setminus P \ \text{ where }P
\text{ is of zero Riesz capacity}$$ and 
$$U^{\mu}(x)+Q(x)  \leq C \text{ for all }x\in \text{supp}\mu,$$ 
then $\mu =\mu_{K,Q}$. 
\end{proposition}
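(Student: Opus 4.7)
The plan is to follow the standard duality argument familiar from logarithmic potential theory (cf.\ \cite{ST}), appealing to Remark \ref{future} at the end to turn an energy inequality into an identification of measures.

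First I would extract the basic identities for $\mu$. Integrating the lower bound $U^{\mu}+Q\geq C$ against $\mu$ requires knowing that $\mu(P)=0$. To obtain this, I first integrate the upper bound $U^{\mu}+Q\leq C$ on $\mathrm{supp}\,\mu$ against $\mu$, getting $I(\mu)+\int Q\,d\mu\leq C$. Because $Q$ is lower semicontinuous on the compact set $K$ it is bounded below, so $I(\mu)<\infty$. Measures of finite Riesz energy assign zero mass to sets of zero Riesz capacity, so $\mu(P)=0$; integrating the lower bound against $\mu$ then gives the reverse inequality, yielding
\[
I(\mu)+\int Q\,d\mu\;=\;C.
\]
The analogous identity for $\nu:=\mu_{K,Q}$, namely $I(\nu)+\int Q\,d\nu=F_w$, comes from Theorem \ref{frost} (integrate the two Frostman inequalities against $\nu$, using that $\nu$ has finite energy and hence does not charge the exceptional polar set).

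Next I would cross-integrate. Since $\nu$ has finite energy, $\nu(P)=0$, so integrating $U^{\mu}+Q\geq C$ against $\nu$ gives
\[
I(\mu,\nu)+\int Q\,d\nu\;\geq\;C.
\]
Integrating the Frostman lower bound $U^{\nu}+Q\geq F_w$ against $\mu$ (which also misses the relevant polar set, since $I(\mu)<\infty$) gives
\[
I(\mu,\nu)+\int Q\,d\mu\;\geq\;F_w.
\]
Adding these two inequalities and subtracting the two equalities established above, the $Q$-integrals cancel and I obtain
\[
2\,I(\mu,\nu)\;\geq\;I(\mu)+I(\nu),
\]
that is, $I(\mu-\nu)\leq 0$. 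The mixed energy $I(\mu,\nu)$ is finite by the Cauchy--Schwarz inequality for the (positive definite) Riesz kernel, $I(\mu,\nu)^{2}\leq I(\mu)I(\nu)<\infty$, so this computation is legitimate.

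Finally, since $\mu-\nu$ is a signed measure of total mass zero whose positive and negative parts have finite mutual energy, Remark \ref{future} gives $I(\mu-\nu)\geq 0$ with equality only when $\mu-\nu=0$. Combined with the reverse inequality this forces $\mu=\nu=\mu_{K,Q}$. The only subtlety I anticipate is the bookkeeping around exceptional polar sets: one must carefully justify that both $\mu$ and $\nu$ assign zero mass to the relevant zero-capacity sets so that the pointwise inequalities can be integrated, and this is precisely where the finite-energy consequence of the upper bound on $\mathrm{supp}\,\mu$ is indispensable.
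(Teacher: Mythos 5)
Your argument is correct, and it takes a genuinely different route from the paper's. The paper proceeds by expanding $I^Q(\mu_{K,Q})$ around $\mu$: writing $\mu_{K,Q}=\mu+(\mu_{K,Q}-\mu)$ it obtains $I^Q(\mu_{K,Q})=I^Q(\mu)+I(\mu_{K,Q}-\mu)+2R$ with $R=\int_K(U^{\mu}+Q)\,d(\mu_{K,Q}-\mu)$, shows $R\geq 0$ from the two hypothesized inequalities, and then sandwiches $I^Q(\mu)\geq I^Q(\mu_{K,Q})\geq I^Q(\mu)$ using minimality of $\mu_{K,Q}$; equality forces $I(\mu_{K,Q}-\mu)=0$ and Remark \ref{future} finishes. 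You instead never invoke minimality of $\mu_{K,Q}$: you integrate the hypothesized inequalities against $\mu$ and against $\mu_{K,Q}$, integrate the Frostman conditions of Theorem \ref{frost} against $\mu_{K,Q}$ and against $\mu$, and add/subtract to cancel the $Q$-terms and conclude $I(\mu-\mu_{K,Q})\leq 0$ directly, again invoking Remark \ref{future}. Both schemes are standard; yours is the symmetric ``cross-integration'' argument familiar from the Frostman-uniqueness proofs in \cite{ST}, while the paper's is the ``expand around the candidate'' argument. A small point in your favor: the paper asserts $I^Q(\mu)<\infty$ rather cryptically (``from the assumptions''), whereas you make explicit that this follows from integrating $U^{\mu}+Q\leq C$ over $\mathrm{supp}\,\mu$ together with the lower bound on $Q$; you are also more careful in checking that $\mu$ and $\mu_{K,Q}$ do not charge the relevant polar sets before integrating. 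The paper's proof is slightly shorter because it only needs a one-sided bound $R\geq 0$ plus minimality, rather than the full set of equalities and cross-inequalities, but the content is equivalent.
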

\begin{proof} We write 
$$\mu_{K,Q}=\mu+(\mu_{K,Q}-\mu).$$ 
Then
$$I^Q(\mu)\geq I^Q(\mu_{K,Q}) = I^Q(\mu) +I(\mu_{K,Q}-\mu)+2R$$
with
\begin{align*}
R := & \int_K\bigl[\int_K \frac{1}{|x-y|^{\alpha}}d\mu(y) +Q(x)\bigr]d(\mu_{K,Q}-\mu)(x) 
\\
 = & \int_K(U^\mu(x) +Q(x))d(\mu_{K,Q}-\mu)(x).
%-\int_K(p_\mu(x)  +p_{f_*(\mu)}(f(x))+Q(x))d\mu(x)
\end{align*}
Note that the above computation is justified. Indeed, from the assumptions $I^{Q}(\mu)<\infty$ and $\mu$ has compact support, the quantities $I^{Q}(\mu)$, $I(\mu)$, $\int Qd\mu$, and the mixed energy $I(\mu,\mu_{K,Q})$ are all finite. Making use of
the inequalities in the hypotheses, we conclude that
$$R\geq  C\int_K d\mu_{K,Q}-C\int_K d\mu=0.$$
Recall that $I(\mu_{K,Q}-\mu)\geq 0$ with equality if and only if $\mu_{K,Q}=\mu$ (Remark \ref{future}). Thus 
$$I^Q(\mu) \geq I^Q(\mu_{K,Q}) 
%= E^Q(\mu) +I(\mu_{K,Q}-\mu)+ I(f_*(\mu_{K,Q}-\mu))+2R
\geq I^Q(\mu)$$
so that equality holds throughout, and $I^Q(\mu) = I^Q(\mu_{K,Q})$, from which
follows $\mu =\mu_{K,Q}$. 
\end{proof}

\begin{remark} For $d-2\leq \alpha <d$, since we have a maximum principle, using 4. of Theorem \ref{frost} and following the argument in the proof of Theorem 4 in \cite{BDS}, for any $\mu \in \mathcal M(K)$ we have
$$\text{``}\inf_{x\in S_w}\text{''}[U^{\mu}(x)+Q(x)]\leq F_w$$
and
$$\sup_{x\in \text{supp}\mu}[U^{\mu}(x)+Q(x)]\geq F_w.$$
Here, $\text{``}\inf_{x\in S}\text{''} F(x)$ denotes the largest number $L$ such that on $S$ the real-valued function $F$ takes values smaller than $L$ only on a set of zero Riesz capacity. The corresponding version of  
this result for certain weights on all of $\R^d$ is stated as equations (1.14) and (1.15) in \cite{CGZ}.

\end{remark}

As in \cite{ST}, we can characterize the compact sets $K\subset \R^d$ which arise as supports of a weighted energy minimizing measure.

\begin{theorem} Let $K\subset \R^d$ be compact and of positive Riesz capacity at each point of $K$. Then there exists $Q\in \mathcal A(K)$ such that $S_w=K$. 
\end{theorem}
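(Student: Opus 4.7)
The plan is to invoke Proposition \ref{frostchar} in reverse. The strategy is to produce a probability measure $\mu\in\mathcal M(K)$ with $\supp(\mu)=K$ whose Riesz potential $U^{\mu}$ is continuous on $\R^{d}$, and then to set $Q:=C-U^{\mu}\big|_{K}$ for any constant $C$. Because $U^{\mu}$ will be continuous and finite on the compact set $K$, so is $Q$, and hence $Q\in\mathcal A(K)$. By construction $U^{\mu}(x)+Q(x)\equiv C$ on $K$, so both inequalities in Proposition \ref{frostchar} hold trivially (with exceptional set $P=\emptyset$); this yields $\mu_{K,Q}=\mu$, and therefore $S_{w}=\supp(\mu)=K$.

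Everything thus reduces to constructing the auxiliary measure $\mu$. I would fix a countable family $\{B_{k}\}_{k\ge1}$ of closed balls with centers in $K$ that forms a neighborhood basis at every point of $K$---for example balls of rational radii centered at a countable dense subset of $K$---and set $K_{k}:=K\cap B_{k}$. The hypothesis that $K$ has positive Riesz capacity at each of its points ensures that $\capa(K_{k})>0$ for every $k$. For each $k$ I select a probability measure $\nu_{k}$ supported on $K_{k}$ whose Riesz potential $U^{\nu_{k}}$ is continuous on $\R^{d}$; one concrete way is to pass to a regular compact subset $K_{k}'\subset K_{k}$ of positive capacity and take $\nu_{k}$ to be the Riesz equilibrium measure of $K_{k}'$, whose potential is constant on $\supp(\nu_{k})=K_{k}'$ and therefore continuous on $\R^{d}$ by the continuity property~5 recorded in Section~2.

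Having the $\nu_{k}$, I form $\mu:=\sum_{k\ge1}a_{k}\nu_{k}$ with $a_{k}>0$, $\sum_{k}a_{k}=1$, and the $a_{k}$ chosen to decay quickly enough that $\sum_{k}a_{k}\|U^{\nu_{k}}\|_{\infty}<\infty$. Then $U^{\mu}=\sum_{k}a_{k}U^{\nu_{k}}$ is the uniform limit on $\R^{d}$ of continuous functions, hence continuous. Moreover every nonempty relatively open subset of $K$ contains some $B_{k}$, which carries positive $\nu_{k}$-mass, so $\supp(\mu)=K$ as required.

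The main obstacle is the selection of the measures $\nu_{k}$ with continuous Riesz potential. This is a standard but nontrivial potential-theoretic input, combining the existence of regular compact subsets of positive capacity inside each $K_{k}$ with the continuity principle already in hand. Once $\mu$ is constructed the rest of the argument---defining $Q$ and applying Proposition \ref{frostchar}---is immediate, and the compactness of $K$ together with continuity of $U^{\mu}$ automatically places $Q$ in $\mathcal A(K)$.
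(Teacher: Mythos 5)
Your proposal is correct and follows essentially the same route as the paper: reduce to building a probability measure $\mu$ with $\supp(\mu)=K$ and $U^{\mu}$ continuous, set $Q=-U^{\mu}$ (your additive constant $C$ changes nothing), and invoke Proposition \ref{frostchar}; your ball-cover-and-convergent-sum construction of $\mu$ is precisely the argument of Theorem IV.1.1 in \cite{ST}, which the paper cites rather than writes out, and the one nontrivial input you flag (a measure of continuous Riesz potential supported in any compact set of positive Riesz capacity) is exactly the Lemma I.6.10/Corollary I.6.11 ingredient the paper also relies on.
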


\begin{proof} Suppose we can find a probability measure $\mu$ with support $K$ such that $U^{\mu}$ is continuous on $K$ (and hence continuous on $\R^d$ by the aforementioned continuity property of Riesz potentials in \cite{La}, Theorem 1.7, p. 69). Then taking $Q(x):=-U^{\mu}(x)$ on $K$, we have 
$$U^{\mu}(x)+Q(x)=0 \ \hbox{on} \ K$$
so that by Proposition \ref{frostchar} we have $\mu =\mu_{K,Q}$. 

To construct such a $\mu$, we follow the arguments in Lemma I.6.10 and Corollary I.6.11 in \cite{ST} (we do not need the final statement in Lemma I.6.10). In particular, for any compact $S\subset \R^d$ of positive Riesz capacity, we obtain a finite, positive measure $\nu$ with support in $S$ such that $U^{\nu}$ is continuous. Using this, we follow exactly the proof of Theorem IV.1.1 in \cite{ST}.

\end{proof}

Next we discretize: for $n\geq 2$, let 
$$VDM_n^Q(x_1,...,x_n):= \exp {\bigl[-\sum_{1\leq i \neq j\leq n}W(x_i-x_j)\bigr]} \cdot \exp{\bigl[-2n\sum_{j=1}^nQ(x_j)\bigr]}$$
$$=\exp {\bigl[-\sum_{1\leq i \neq j\leq n}\frac{1}{|x_i-x_j|^{\alpha}}\bigr]} \cdot \exp{\bigl[-2n\sum_{j=1}^nQ(x_j)\bigr]}=:\exp[-L_n(x_1,...,x_n)]$$
$$\hbox{where} \ L_n(x_1,...,x_n)= \sum_{1\leq i\neq j\leq n}\frac{1}{|x_i-x_j|^{\alpha}} +2n\sum_{j=1}^nQ(x_j).$$
Thus
$$\frac{1}{n(n-1)} L_n(x_1,...,x_n)= \frac{1}{n(n-1)}\sum_{1\leq i\neq j\leq n}\frac{1}{|x_i-x_j|^{\alpha}} +\frac{2}{n-1} \sum_{j=1}^nQ(x_j)$$ 
is the approximate weighted Riesz energy of $\mu_n :=\tfrac{1}{n}\sum_{j=1}^n \delta_{x_j}$; i.e., where we ignore ``diagonal'' terms (which make the true Riesz energy of $\mu_n$ infinite).

We define the $n$-th weighted diameter $\delta_{n}^{Q}(K)$ by
\begin{equation}\label{deltan}\delta_n^Q(K):=\sup_{x_1,...,x_n\in K} 
VDM_n^Q(x_1,...,x_n)^{1/n^2}.\end{equation}
We will show the limit of these quantities exists, and this {\it weighted transfinite diameter of $K$ with respect to $Q$} satisfies
$$\delta^Q(K):=\lim_{n\to \infty} \delta_n^Q(K) = \exp {(-V_w)}=\exp{(-I^Q(\mu_{K,Q}))}.$$
By upper semicontinuity of $(x_1,...,x_n)\to -L_n(x_1,...,x_n)$ on $K^n$ and $-Q$ on $K$ the supremum in (\ref{deltan}) is attained; we call any collection of $n$ points of $K$ at which the maximum is
attained {\it weighted Fekete points} of order $n$ for $K,Q$. Following the proofs of Propositions 3.1--3.3 of \cite[Section 3]{VELD} we have:

\begin{theorem} \label{sec3} Given $K\subset \R^d$ compact and of positive Riesz capacity and $Q\in 
\mathcal A(K)$,
\begin{enumerate}
\item if $\{\mu_n=\frac{1}{n}\sum_{j=1}^n\delta_{x_j^{(n)}}\}\subset \mathcal M(K)$ converge weakly to $\mu\in \mathcal M(K)$, then 
\begin{equation}\label{upboundVDM}
\limsup_{n\to \infty} VDM_n^Q(x_1^{(n)},...,x_n^{(n)})^{1/n^2}\leq \exp{(-I^Q(\mu))};
\end{equation}
\item we have $\delta^Q(K):=\lim_{n\to \infty} \delta_n^Q(K)$ exists and 
$$\delta^Q(K)=\exp{(-V_w)}=\exp{(-I^Q(\mu_{K,Q}))};$$
\item if $\{x_j^{(n)}\}_{j=1,...,n; \ n=2,3,...}\subset K$ and 
$$\lim_{n\to \infty} VDM_n^Q(x_1^{(n)},...,x_n^{(n)})^{1/n^2}= \exp{(-V_w)}$$
then 
$$\mu_n=\frac{1}{n}\sum_{j=1}^n\delta_{x_j^{(n)}}\to \mu_{K,Q} \ \hbox{weakly}.$$

\end{enumerate}

\end{theorem}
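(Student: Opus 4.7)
The plan is to prove (1), (2), (3) in order, following the standard three-step Fekete--Szeg\H{o} scheme adapted to the Riesz kernel $W(y)=|y|^{-\alpha}$. The two recurring technical points are the diagonal singularity of $W$ and the possibility that $Q=+\infty$ on part of $K$.

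For (1), the key is truncation. Set $W_M:=\min(W,M)$; this is bounded and continuous on $\R^d$, and adding back the diagonal contribution gives
$$\frac{1}{n^2}\sum_{i\neq j}W(x_i^{(n)}-x_j^{(n)})\;\ge\;\int\!\!\int W_M\,d\mu_n\otimes d\mu_n-\frac{M}{n}.$$
Weak convergence $\mu_n\to\mu$ applied to the bounded continuous kernel $W_M$, followed by monotone convergence $W_M\uparrow W$, yields $\liminf_n \frac{1}{n^2}\sum_{i\neq j}W\ge I(\mu)$. For the external field, lower semicontinuity of $Q$ (portmanteau) gives $\liminf_n 2\int Q\,d\mu_n\ge 2\int Q\,d\mu$. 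Summing, $\liminf_n L_n(x^{(n)})/n^2\ge I^Q(\mu)$, equivalent to the claimed $\limsup$ bound on $VDM_n^Q(\cdot)^{1/n^2}$.

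For (2), the upper estimate $\limsup\delta_n^Q(K)\le e^{-V_w}$ is immediate from (1): extract a weakly convergent subsequence of empirical measures of weighted Fekete configurations and use $I^Q(\mu^*)\ge V_w$. The matching lower bound comes from Jensen's inequality against $\mu_{K,Q}^{\otimes n}$. Since this is a probability measure on $K^n$ and the exponential is convex,
$$\delta_n^Q(K)^{n^2}=\sup_{K^n}VDM_n^Q\;\ge\;\int_{K^n} e^{-L_n}\,d\mu_{K,Q}^{\otimes n}\;\ge\;\exp\!\Bigl(-\!\!\int L_n\,d\mu_{K,Q}^{\otimes n}\Bigr).$$
A Fubini calculation gives $\int L_n\,d\mu_{K,Q}^{\otimes n}=n(n-1)I(\mu_{K,Q})+2n^2\!\int Q\,d\mu_{K,Q}=n^2V_w-nI(\mu_{K,Q})$; both summands are finite because $Q$ is lsc on compact $K$ (hence bounded below) and $V_w<\infty$. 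Moreover $\mu_{K,Q}$ is atomless (atoms would force $I(\mu_{K,Q})=\infty$), so $\mu_{K,Q}^{\otimes n}$ assigns no mass to the diagonal $\{x_i=x_j\}$ and the Jensen step is legitimate. Hence $\delta_n^Q(K)\ge\exp(-V_w+I(\mu_{K,Q})/n)\to e^{-V_w}$.

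For (3), let $\mu_n$ denote the empirical measures of a sequence with $VDM_n^Q(x^{(n)})^{1/n^2}\to e^{-V_w}$. By weak compactness of $\mathcal M(K)$, every subsequence admits a weakly convergent sub-subsequence $\mu_{n_k}\to\mu^*$; part (1) forces $e^{-V_w}\le e^{-I^Q(\mu^*)}$, so $I^Q(\mu^*)=V_w$, and uniqueness of the weighted equilibrium measure (Theorem~\ref{frost}(2)) gives $\mu^*=\mu_{K,Q}$. Since every subsequential weak limit coincides, the whole sequence converges weakly to $\mu_{K,Q}$. The main obstacle throughout is the diagonal singularity of $W$, handled by kernel truncation in (1) and by atomlessness of $\mu_{K,Q}$ in (2).
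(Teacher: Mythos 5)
Your proof is correct and follows essentially the same route as the paper's. Part (1) is the identical truncation-and-lower-semicontinuity argument; part (3) is the identical subsequence argument using (1) and the uniqueness of $\mu_{K,Q}$. In part (2) the upper estimate on $\delta^Q(K)$ is obtained as in the paper, and the lower estimate via integrating against $\mu_{K,Q}^{\otimes n}$ is the same computation the paper performs (it writes the bound $D_n(K)\le I(\mu)+\tfrac{2n}{n-1}\int Q\,d\mu$ by integrating the pointwise inequality $D_n(K)\le\tfrac{1}{n(n-1)}L_n$ against $\mu^{\otimes n}$). One minor stylistic point: your Jensen step is superfluous. The chain $\sup e^{-L_n}\ge\int e^{-L_n}d\mu^{\otimes n}\ge e^{-\int L_n d\mu^{\otimes n}}$ collapses to the elementary inequality $\inf L_n\le\int L_n\,d\mu_{K,Q}^{\otimes n}$, which is what the paper uses directly; appealing to Jensen therefore gains nothing, though it is of course valid. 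Your observations that atomlessness kills the diagonal and that $I(\mu_{K,Q})\ge 0$ (so the lower bound $\delta_n^Q(K)\ge e^{-V_w}$ actually holds for every $n$) are correct and slightly sharper than the paper's asymptotic phrasing.
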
  

\begin{proof} To prove 1., note that $\mu_n\times \mu_n \to \mu \times \mu$ in $\mathcal M(K)\times \mathcal M(K)$. Exploiting  lower semicontinuity of the kernel 
$$(x,y)\to W(x-y)=\frac{1}{|x-y|^{\alpha}},$$
for $M\in \R$ we define the continuous kernel 
$$h_M(x,y):= \min [M,W(x-y)]$$
and we have
$$I^Q(\mu)=\lim_{M\to \infty} \int_K \int_K h_M(x,y) d\mu(x)d\mu(y) + 2\int_K Qd\mu$$
$$\leq \lim_{M\to \infty} \liminf_{n\to \infty} \bigl( \int_K \int_K h_M(x,y) d\mu_n(x)d\mu_n(y) + 2\int_K Qd\mu_n\bigr)$$
$$\leq \lim_{M\to \infty} \liminf_{n\to \infty} \bigl[\frac{1}{n^2}  \bigl( nM + \sum_{j\not = k}\frac{1}{|x_j^{(n)}-x_k^{(n)}|^{\alpha}}\bigr)+ \frac{2}{n}\sum_{j=1}^nQ(x_j^{(n)}) \bigr].$$
Here we used lower semicontinuity of $Q$ to conclude $ \int_K Qd\mu\leq \liminf_{n\to \infty} \int_K Qd\mu_n$. Thus
$$ \limsup_{n\to \infty} VDM_n^Q(x_1^{(n)},...,x_n^{(n)})^{1/n^2} \leq e^{-I^Q(\mu)}.$$

To prove 2., let
$$D_n(K):=\inf_{x_1,...,x_n\in K}\frac{1}{n(n-1)} L_n(x_1,...,x_n).$$
Note that 
\begin{equation}\label{dnvdm}
D_n(K)\leq \frac{-1}{n(n-1)}\log VDM_n^Q(x_1,...,x_n), \ \hbox{all} \ (x_1,...,x_n)\in K^n
\end{equation}
with equality for weighted Fekete points of order $n$ for $K,Q$. Fix $n$ points $x_1,...,x_n\in K$. Then
$$D_n(K) \leq \frac{1}{n(n-1)} L_n(x_1,...,x_n)=\frac{1}{n(n-1)}\sum_{1\leq i \neq j\leq n}\frac{1}{|x_i-x_j|^{\alpha}} +\frac{2}{n-1} \sum_{j=1}^nQ(x_j).$$
For $\mu \in \mathcal M(K)$, integrate both sides with respect to $\prod_{i<j}d\mu(x_i)d\mu(x_j)$:
$$D_n(K) \leq I(\mu)+ \frac{2n}{n-1}\int_K Qd\mu.$$
Thus
\begin{equation}\label{upest} \limsup_{n\to \infty} D_n(K)\leq I^Q(\mu).\end{equation}
On the other hand, taking $\mu_n := \frac{1}{n}\sum_{j=1}^n \delta_{x_j^{(n)}}$ where 
$D_n(K)=\frac{1}{n(n-1)} L_n(x_1^{(n)},...,x_n^{(n)})$ (i.e., weighted Fekete points of order $n$ for $K,Q$), if $\mu$ is any weak limit of this sequence  then 1. implies that 
$$\limsup_{n\to \infty} VDM_n^Q(x_1^{(n)},...,x_n^{(n)})^{1/n^2}\leq \exp{(-I^Q(\mu))}.$$
From (\ref{upest}) and the equality portion of (\ref{dnvdm}), 
\begin{equation}\label{upperbound} \limsup_{n\to \infty} \frac{-1}{n(n-1)}\log VDM_n^Q(x_1^{(n)},...,x_n^{(n)})=\limsup_{n\to \infty} [-D_n(K)]\leq I^Q(\mu).\end{equation}
Thus $\lim_{n\to \infty} VDM_n^Q(x_1^{(n)},...,x_n^{(n)})^{1/n^2}$ exists and equals $\exp{(-I^Q(\mu))}$ where $\mu$ is any weak limit of weighted Fekete measures. Since $\mu_{K,Q}$ is the unique weighted energy minimizing measure we claim that 2. follows: for if $\sigma\in \mathcal M(K)$ is arbitrary, applying 1. to $\sigma$ and using (\ref{upperbound}) shows
$$\exp{(-I^Q(\sigma))}\leq  \liminf_{n\to \infty} VDM_n^Q(x_1^{(n)},...,x_n^{(n)})^{1/n^2}$$
$$\leq
\limsup_{n\to \infty} VDM_n^Q(x_1^{(n)},...,x_n^{(n)})^{1/n^2}\leq \exp{(-I^Q(\mu))}.$$

Item 3. follows from 1. and 2. 
\end{proof}
\begin{remark} Note that item 1. required lower semicontinuity of the kernel 
$$(x,y)\to W(x-y)=\frac{1}{|x-y|^{\alpha}}$$
and of $Q$ while item 2. (and hence 3.) required the existence and uniqueness of the weighted energy minimizing measure $\mu_{K,Q}$.
\end{remark}

%\begin{remark} If $Q\equiv 0$, we write
%$$VDM_n(x_1,...,x_n):= \exp {\bigl[-\sum_{1\leq i<j\leq n}W(x_i-x_j)\bigr]}$$
%\end{remark}

\section{Bernstein-type estimate} 
In this section, we always assume $Q\in C(K)$. 

\vskip6pt

If we fix $n-1$ points $x_2,...,x_n\in K$ and consider
$$y\to  VDM_n^Q(y,x_2,...,x_n), \ y=(y_1,...,y_d)\in \R^d,$$
then this function is of the form 
$$g(x_2,...,x_n)\cdot f_n^Q(y):=g(x_2,...,x_n)\cdot \exp  {\bigl[-\sum_{j=2}^n\frac{1}{|y-x_j|^{\alpha}}-2nQ(y)\bigr]}$$
where 
$$g(x_2,...,x_n)=\exp{\bigl(-2\sum_{j=2}^n Q(x_j)}\bigr)\cdot \exp  {\bigl(-\sum_{2\leq j \neq k\leq n }\frac{1}{|x_k-x_j|^{\alpha}}\bigr)}.$$ 
For notation, fixing $K$ and $Q$,  we let 
\begin{equation}\label{pnq} \mathcal P_n^Q:=\left\{f_n^Q(y):=
\exp\left(-\sum_{j=2}^n\frac{1}{|y-x_j|^{\alpha}}-2nQ(y)\right): x_2,...,x_n\in K\right\}.\end{equation}
If $Q\equiv 0$, we simply write $\mathcal P_n$ and $f_n$.

%we assume one of the following hypotheses $(H)$ on $K\subset \R^d$:
We recall the definition of the box-counting (or Minkowski) dimension of a bounded subset $K$ 
of $\R^{d}$, see e.g. \cite[Chapter 3]{Fal}. Let $N_{\delta}(K)$ be the smallest number of closed balls of radius $\delta$ which can cover $K$. The lower and upper box-counting dimensions of $K$ are defined as
$$\underline{\dim}_{B}K=\liminf_{\delta\to 0}\frac{\log N_{\delta}(K)}{-\log\delta},\qquad
\overline{\dim}_{B}K=\limsup_{\delta\to 0}\frac{\log N_{\delta}(K)}{-\log\delta}.
$$ 
If the limits are equal, the common value $\dim_{B}$ is refered to as the box-counting dimension of $K$. In particular, a smooth, compact $m$-dimensional submanifold of $\R^{d}$ (or a subdomain of it) has $\dim_{B}$ equal to $m$. 
\begin{Remark}
For a general bounded set $K$ one has
\begin{equation}\label{compare-dim}
\dim_{H}K\leq\underline{\dim}_{B}K\leq\overline{\dim}_{B}K,
\end{equation}
where $\dim_{H}$ denotes the Hausdorff dimension. Equalities hold for many regular sets, in particular for Ahlfors regular sets, i.e. sets which support a Borel regular measure $\mu$ such that, for some constant $C>1$, and all $x\in K$,
$$C^{-1}r^{s}\leq\mu(B(x,r))\leq Cr^{s},$$
where the exponent $s$ is the common dimension. 
\end{Remark}
%\begin{enumerate}
%\item $K$ is a smooth, compact $m-$dimensional submanifold of $\R^d$ with $1\leq m \leq d-1$;
%\item $K=\bar D$ where $D$ is a bounded, smoothly bounded subdomain of a smooth $m-$dimensional submanifold of $\R^d$ with $1\leq m \leq d$.
%\end{enumerate}
\begin{lemma}\label{minor}
Assume $K\subset\R^{d}$ is a compact set of positive lower box-counting dimension $m=\underline\dim_{B}K>0$. For every $n\geq 2$, there exists a constant $A_{n}>0$ such that 
$$\forall f_{n}\in\PP_{n},\qquad\|f_{n}\|_{K}\geq A_{n},$$
and for $n$ large enough, one may take $A_{n}=\exp(-2^{\alpha}n^{1+2\alpha/m})$.
\end{lemma}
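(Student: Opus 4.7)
The plan is to exploit the covering characterization of $N_{r}(K)$: given arbitrary points $x_2,\dots,x_n\in K$, if $r>0$ is chosen so that $N_{r}(K)>n-1$, then the $n-1$ closed balls $\overline{B}(x_j,r)$ cannot cover $K$, so there exists $y\in K$ with $|y-x_j|>r$ for every $j$. For such a $y$,
$$f_n(y)=\exp\Bigl(-\sum_{j=2}^n |y-x_j|^{-\alpha}\Bigr)\geq \exp\bigl(-(n-1)/r^{\alpha}\bigr),$$
and this yields a lower bound on $\|f_n\|_K$ that is uniform in the choice of $x_2,\dots,x_n$.

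The key quantitative step is to choose $r=r_n$ for $n$ large. The hypothesis $\underline{\dim}_B K=m>0$, unpacked with $\epsilon=m/2$ in the $\liminf$, provides a threshold $\delta_0>0$ (depending only on $K$) such that $N_{\delta}(K)\geq \delta^{-m/2}$ for all $0<\delta\leq\delta_0$. Setting $r_n := n^{-2/m}$, one has $r_n^{-m/2}=n$, so once $n$ is large enough that $r_n\leq\delta_0$, the inequality $N_{r_n}(K)\geq n>n-1$ holds and the first step applies. Substituting,
$$\|f_n\|_K \geq \exp\bigl(-(n-1)\,r_n^{-\alpha}\bigr)\geq \exp\bigl(-n^{1+2\alpha/m}\bigr)\geq \exp\bigl(-2^{\alpha}n^{1+2\alpha/m}\bigr),$$
using $2^\alpha\geq 1$. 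For the remaining finitely many values $2\leq n<n_0$, positivity of $m$ forces $N_{r}(K)\to\infty$ as $r\to 0$ (otherwise $\underline{\dim}_B K$ would vanish), so one can pick $r=r(K,n)>0$ with $N_r(K)\geq n$, and the first paragraph again produces a positive (if non-explicit) constant $A_n=\exp\bigl(-(n-1)/r^\alpha\bigr)$.

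There is no serious obstacle here: the argument is essentially pigeonhole combined with the box-counting estimate for $N_\delta(K)$. The only mildly delicate point is verifying that the inequality $N_\delta(K)\geq \delta^{-m/2}$, which follows from the definition of $\underline{\dim}_B K$ only for $\delta$ below some threshold $\delta_0=\delta_0(K)$, applies at the specific scale $\delta=n^{-2/m}$; this is precisely the reason the explicit form of $A_n$ is asserted only for $n$ sufficiently large.
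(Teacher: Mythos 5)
Your proof is correct and takes essentially the same approach as the paper: both arguments combine the pigeonhole principle with the covering-number estimate coming from the lower box-counting dimension to locate a point $y\in K$ at distance at least roughly $n^{-2/m}$ from all of $x_2,\dots,x_n$, then evaluate $f_n(y)$. The only cosmetic difference is that you fix the scale $r_n=n^{-2/m}$ directly and compare with $N_{r_n}(K)$, whereas the paper introduces the minimal covering radius $M_n(K)$ and works at scale $M_n(K)/2$, which is the source of the (harmless) factor $2^\alpha$ that your version does not actually need.
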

\begin{proof}
Denote by $M_{n}(K)$ the smallest radius $\delta$ such that $K$ can be covered by $n$ closed balls of radius $\delta$. Since $n\geq N_{M_{n}(K)}(K)$, one has, for $n$ large enough,
$$
\frac{\log n}{-\log M_{n}(K)}\geq
\frac{\log N_{M_{n}(K)}(K)}{-\log M_{n}(K)}\geq \frac{m}{2},
$$
or equivalently
$$
M_{n}(K)\geq \left(\frac1n\right)^{2/m}.
$$
Since $n$ balls of radius $M_{n}(K)/2$ cannot cover $K$, we deduce that for $f_n(y):=
e^{\left(-\sum_{i=2}^n\frac{1}{|y-x_i|^{\alpha}}\right)}$, 
$$\exists y\in K,\quad,\forall i=2,\ldots,n,\quad|y-x_{i}|>\frac{M_{n}(K)}{2},$$
which implies
$$\forall f_{n}\in\PP_{n},\quad\|f_{n}\|_{K}\geq \exp\left(-\frac{2^{\alpha}n}{M_{n}(K)^{\alpha}}\right)
\geq\exp(-2^{\alpha}n^{\frac{2\alpha}{m}+1}).
$$
Since, by assumption, for every $n$, $M_{n}(K)$ is positive, the first inequality shows the existence of the constant $A_{n}$, which may be chosen as the first exponential. For $n$ large enough, it may also be chosen as the second exponential, expressed in terms of the lower box-counting dimension $m$ of $K$.
\end{proof}
We next obtain a uniform Bernstein-type estimate for $f_n \in \mathcal P_n$. 
%%%%%%%%%%%%%%%%%%%%%%%%%%%%%%%
\begin{proposition}\label{key}
Assume $K\subset\R^{d}$ is a compact set of positive lower box-counting dimension $m>0$. Then,
%the following inequality holds true:
$$\forall n\geq2,\quad\exists C_{n}>0,\quad\forall f_{n}\in\PP_{n},\qquad\|\nabla f_{n}\|_{2,K}:=||[\sum_{j=1}^n |\partial f_{n}/\partial y_{j}|^2]^{1/2}||_K \leq 
C_{n}\|f_{n}\|_{K},$$
where, for $n$ large enough, $C_{n}=C_{\alpha}n^{\beta}$ with a constant $C_{\alpha}$ depending on $\alpha$ only, and $\beta=2+1/\alpha+2\alpha/m+2/m$. In particular, $C_{n}^{1/n}\to 1$ as $n\to\infty$.
\end{proposition}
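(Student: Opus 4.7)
My plan is to first differentiate $f_n$ explicitly to obtain a pointwise gradient bound, then convert it to a bound against $\|f_n\|_K$ using Lemma \ref{minor}. Differentiating the exponential yields
$$
\nabla f_n(y) = \alpha\, f_n(y) \sum_{k=2}^n |y-x_k|^{-\alpha-2}(y-x_k),
$$
hence $|\nabla f_n(y)| \leq \alpha\, f_n(y)\, S(y)$ with $S(y) := \sum_{k=2}^n |y-x_k|^{-\alpha-1}$. The key analytic tool is the elementary identity $\sup_{t>0} t^{1+1/\alpha}\, e^{-t} =: M_\alpha$, applied via the substitution $t = |y-x_k|^{-\alpha}$ together with the trivial bound $f_n(y) \leq e^{-|y-x_k|^{-\alpha}}$; this yields the term-by-term estimate $f_n(y)\,|y-x_k|^{-\alpha-1} \leq M_\alpha$, and summing, the universal pointwise bound $|\nabla f_n(y)| \leq \alpha(n-1) M_\alpha$.

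To upgrade this into the required ratio inequality with polynomial $C_n$, I would introduce a threshold $\rho = \rho(n)$ and split the analysis according to the minimum distance $r(y) := \min_k |y-x_k|$. In the far regime $r(y) \geq \rho$, $S(y) \leq (n-1)\rho^{-\alpha-1}$, so
$$
|\nabla f_n(y)| \leq \alpha(n-1)\rho^{-\alpha-1} f_n(y) \leq \alpha(n-1)\rho^{-\alpha-1}\|f_n\|_K.
$$
Taking $\rho$ comparable to $M_n(K)\, n^{-1/\alpha}$ and invoking the bound $M_n(K) \geq n^{-2/m}$ coming from the proof of Lemma \ref{minor} gives $\rho^{-\alpha-1}$ of order $n^{1+1/\alpha+2(\alpha+1)/m}$, so the resulting constant is $\alpha\, n \cdot n^{1+1/\alpha+2\alpha/m+2/m} = \alpha\, n^\beta$, matching the exponent in the claim.

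The near regime $r(y) < \rho$ is the main obstacle. Naively combining the uniform pointwise bound $|\nabla f_n(y)| \leq \alpha(n-1) M_\alpha$ with the universal lower bound $\|f_n\|_K \geq A_n = \exp(-2^\alpha n^{1+2\alpha/m})$ of Lemma \ref{minor} produces only the exponentially large constant $\alpha n M_\alpha/A_n$. To recover a polynomial $C_n$ one must exploit that in this regime $f_n(y) \leq e^{-\rho^{-\alpha}}$ is itself very small, and combine this with a localized covering argument, in the spirit of Lemma \ref{minor}, to produce a matching lower bound for $f_n$ at a nearby point in $K$, so that the exponential factors cancel and only polynomial orders survive. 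The three contributions to $\beta = 2+1/\alpha+2\alpha/m+2/m$ record, respectively, the singularity scaling ($1/\alpha$), the covering radius from the box-dimension assumption ($2/m$), and the extra $2\alpha/m$ from raising $M_n(K)^{-\alpha-1}$. Assembling both regimes yields $C_n = C_\alpha n^\beta$, from which $C_n^{1/n}\to 1$ is immediate. The mere existence of a finite positive $C_n$ for every $n \geq 2$ is guaranteed already by combining the uniform pointwise gradient bound with the positive lower bound $A_n$ of Lemma \ref{minor}.
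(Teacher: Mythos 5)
Your opening moves coincide with the paper's: the same differentiation of $f_n$, the same use of $M_\alpha:=\sup_{\rho>0}\rho^{-\alpha-1}e^{-\rho^{-\alpha}}$, and the same uniform pointwise gradient bound $\|\nabla f_n(y)\|_2\leq n\sqrt{d}\,\alpha M_\alpha$; and your observation that this, together with $\|f_n\|_K\geq A_n$ from Lemma \ref{minor}, gives the qualitative existence of a finite $C_n$ for each $n$, is correct. But after that you diverge, and the divergence contains a genuine gap. Your far-regime computation is fine and does land on the exponent $\beta$, but the near regime is exactly where the estimate must be made to depend on $\|f_n\|_K$ rather than on the global constant $A_n$, and you do not carry this out — you merely assert that $f_n(y)\leq e^{-\rho^{-\alpha}}$ is ``very small'' and that a ``localized covering argument'' should ``produce a matching lower bound for $f_n$ at a nearby point.'' This is not a proof plan: the quantity you must compare against is the global supremum $\|f_n\|_K$, not the value of $f_n$ at a nearby point, and nothing guarantees that $\|f_n\|_K$ is itself small in the near regime, or that $f_n(y)/\|f_n\|_K$ is controllable within polynomial factors. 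You correctly identify the obstacle but do not resolve it.

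The paper's mechanism for handling precisely this difficulty is a scaling argument that your proposal does not contain. For $\lambda>0$ one sets $F_n(y):=\exp\bigl(-\sum_{j=2}^n|y-\lambda x_j|^{-\alpha}\bigr)$ on $\lambda K$ and uses the exact identities
\begin{equation*}
F_n(\lambda y)=f_n(y)^{\lambda^{-\alpha}},\qquad
\lambda\,\nabla F_n(\lambda y)=\lambda^{-\alpha}f_n(y)^{\lambda^{-\alpha}-1}\nabla f_n(y),
\end{equation*}
which yield $\|\nabla f_n\|_{2,K}\leq \lambda^{\alpha+1}\|\nabla F_n\|_{2,\lambda K}\,\|f_n\|_K^{1-\lambda^{-\alpha}}$. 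Applying the universal pointwise bound (\ref{abs-bound}) to $F_n$ and optimizing $\lambda^{\alpha+1}\|f_n\|_K^{-\lambda^{-\alpha}}$ over $\lambda$ gives
\begin{equation*}
\|\nabla f_n\|_{2,K}\leq C_\alpha\, n\,\bigl(\log(1/\|f_n\|_K)\bigr)^{1+1/\alpha}\|f_n\|_K,
\end{equation*}
which is a single clean estimate valid on all of $K$ — no near/far dichotomy is needed. Lemma \ref{minor} is then used only once, to convert $\log(1/\|f_n\|_K)\leq 2^\alpha n^{1+2\alpha/m}$ into the stated polynomial $C_\alpha n^\beta$. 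The crucial point, which the scaling trick buys and your sketch does not, is that the ``cancellation of exponentials'' you hope for is achieved automatically by raising $f_n$ to the power $\lambda^{-\alpha}$ and tuning $\lambda$ to the size of $\|f_n\|_K$; this replaces a $\|f_n\|_K$-dependent threshold $\rho$ with a $\|f_n\|_K$-dependent dilation $\lambda$, and unlike the threshold approach it never leaves behind an uncontrolled near region. You would need to supply such a mechanism to close your argument.
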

Note that by (\ref{compare-dim}) the above proposition applies as soon as $K$ has positive Hausdorff dimension.
\begin{proof}
We estimate $\partial f_{n}/\partial y_{1}$ on $\R^{d}$. We have
$$\frac{\partial f_{n}}{\partial y_{1}}(y)=\alpha f_{n}(y)\sum_{j=2}^{n}\frac
{y_{1}-(x_{j})_{1}}{|y-x_{j}|^{\alpha+2}},$$
so that
$$\left|\frac{\partial f_{n}}{\partial y_{1}}(y)\right|\leq\alpha f_{n}(y)\sum_{j=2}^{n}\frac{1}{|y-x_{j}|^{\alpha+1}}.$$
%Assume first $\|f_{n}\|_{K}\geq 1/2$. 
Denote by $M_{\alpha}$ the maximum of the function
$$\rho>0\to\frac{1}{\rho^{\alpha+1}}\exp\Big(-\frac{1}{\rho^{\alpha}}\Big).$$
Using the fact that $\exp\Big(-\frac{1}{|y-x_{j}|^{\alpha}}\Big)\leq 1$ for $j=2,...,n$ so that 
$$\exp\Big(-\sum_{j=2}^n\frac{1}{|y-x_{j}|^{\alpha}}\Big)\leq \exp\Big(-\frac{1}{|y-x_{j}|^{\alpha}}\Big),$$
we obtain the estimate
$$\left|\frac{\partial f_{n}}{\partial y_{1}}(y)\right|\leq\alpha 
\sum_{j=2}^{n}\exp\Big(-\frac{1}{|y-x_{j}|^{\alpha}}\Big)\frac{1}{|y-x_{j}|^{\alpha+1}}
\leq (n-1)\alpha M_{\alpha}.$$
Thus
\begin{equation}\label{abs-bound}
\forall y\in\R^{d},\qquad\|\nabla f_{n}(y)\|_{2}=[\sum_{j=1}^n |\frac{\partial f_{n}}{\partial y_{j}}(y)|^2]^{1/2}\leq n\sqrt{d}\alpha M_{\alpha}.
\end{equation}
%In particular, if $\|f_{n}\|_{K}\geq 1$, we deduce
%$$\|\nabla f_{n}\|_{2,K}\leq n\sqrt{d}\alpha M_{\alpha}\|f_{n}\|_{K},$$
%which proves the statement of the lemma for $f_{n}\in\PP_{n}(K)$ such that $\|f_{n}\|_{K}\geq 1$.
%Now, we assume $\|f_{n}\|_{K} < 1$. 
For any $\lambda>0$, we consider the functions
$$f_{n}(y)=\exp\Big(-\sum_{j=2}^{n}\frac{1}{|y-x_{j}|^{\alpha}}\Big),\qquad
F_{n}(y)=\exp\Big(-\sum_{j=2}^{n}\frac{1}{|y-\lambda x_{j}|^{\alpha}}\Big),$$
on $K$ and $\lambda K$. It is easily checked that, for $y\in K$,
$$F_{n}(\lambda y)=\left(f_{n}\left(y\right)\right)^{\lambda^{-\alpha}},\qquad
%\lambda\frac{\partial F_{n}}{\partial y_{1}}(\lambda y)
%=\lambda^{-\alpha}\left(f_{n}\left(y\right)\right)^{\lambda^{-\alpha}-1}\frac{\partial f_{n}}{\partial y_{1}}\left(y\right),
\lambda\nabla F_{n}(\lambda y)=\lambda^{-\alpha}\left(f_{n}\left(y\right)\right)^{\lambda^{-\alpha}-1}\nabla f_{n}\left(y\right).
$$
Thus,
$$\|\nabla f_{n}\|_{2,K}\leq\lambda^{\alpha+1}
\|\nabla F_{n}\|_{2,\lambda K}\|f_{n}\|_{K}^{1-\lambda^{-\alpha}}.$$
%We choose $\mu$ such that $\|f_{n}\|_{K}\leq\mu<1$ and $\lambda<1$ such that
%$\|f_{n}\|_{K}^{\lambda^{-\alpha}}=\mu$.
We choose $\lambda$ so as to minimize $\lambda^{\alpha+1}\|f_{n}\|_{K}^{-\lambda^{-\alpha}}$.
One may check that the function
$$\lambda>0\to\lambda^{\alpha+1}\|f_{n}\|_{K}^{-\lambda^{-\alpha}}$$
has a unique minimum which is
$$\left(\frac{e\alpha\log(1/\|f_{n}\|_{K})}{\alpha+1}\right)^{1+1/\alpha}.$$
Hence,
$$\|\nabla f_{n}\|_{2,K}\leq C_{\alpha}n\log(1/\|f_{n}\|_{K})^{1+1/\alpha}\|f_{n}\|_{K},
$$
where we have used (\ref{abs-bound}) applied to the function $F_{n}$.
Making use of Lemma \ref{minor} gives the result.
% for $n$ large enough. On the other hand, for a fixed value of $n$, the existence of a constant $C_{n}$, uniform on $\PP_{n}(K)$, follows from the above inequality and the fact that the minimum of $\|f_{n}\|_{K}$ over functions $f_{n}\in\PP_{n}(K)$ is positive (by a reasoning as in the proof of Lemma \ref{minor} using the fact that $M_{n}(K)>0$).
\end{proof}
%%%%%%%%%%%%%%%%%%%%%%%%%%%%%%%%
For the next result, we need to apply the Bernstein estimate locally. Thus, we introduce a local box-counting dimension of a subset $A$ of $\R^{d}$,
$$\forall x\in A,\quad\dim_{B}(A,x):=\lim_{r\to 0}\dim_{B}(A\cap B(x,r)).$$

In the next theorem, we assume that the compact set $K$ satisfies the following hypotheses:
\begin{enumerate}
\item There exists a $\rho>0$ such that, for all $x\in K$, $\dim_{B}(K,x)\geq\rho$.
\item %we can also find 
For $\delta=\delta(K)$ sufficiently small one can find $L=L(K,\delta)>0$ so that for any $x\in K$ and $y\in K\cap B(x,\delta)$, there is a rectifiable curve $\gamma\subset K $ joining $x$ to $y$ of length at most $L|x-y|$ where $L$ is independent of $x,y$. This property of a set is often called local quasiconvexity in the literature, see e.g. \cite{Hei}. It implies in particular that $K$ is locally path connected.
\end{enumerate}

We can now prove:

\begin{theorem} \label{bmq} Let $\mu$ be a positive measure on $K$ of finite total mass and suppose $\mu$ satisfies the following {\it mass density condition}: there exist constants $T,c, r_0 >0$ such that for all $x\in K$, 
\begin{equation}\label{mass2} \frac{\mu(B(x,r))}{r^T}\geq c >0 \ \hbox{for all} \ r\leq r_0.\end{equation}
Then for any $Q\in C(K)$, 
\begin{equation}\label{BMQ} ||f_n^Q||_K \leq M_n \int_K f_n^Q(x)d\mu(x) \ \hbox{for all} \ f_n^Q \in \mathcal P_n^Q \end{equation}
where $M_n=M_n(Q)$ satisfies $M_n^{1/n}\to 1$.
\end{theorem}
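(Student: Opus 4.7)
The plan is to combine the Bernstein-type estimate of Proposition \ref{key} with local quasiconvexity and the mass-density condition via the standard local reconstruction argument, but after first lifting Proposition \ref{key} to the weighted family $\mathcal P_n^Q$. Without this lift, handling $Q$ only through its modulus of continuity produces an $e^{O(n\|Q\|_K)}$ factor in the choice of ball size (arising from the crude estimate $\|f_n\|_K/f_n(y^*)\le e^{4n\|Q\|_K}$, where $y^*$ maximizes $f_n^Q$) that survives taking $n$-th roots.

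First I would dispose of the unweighted case $Q\equiv 0$. Let $y^*\in K$ realize $\|f_n\|_K$ and set $r_n=1/(2LC_n)$, where $L$ is the quasiconvexity constant of hypothesis 2 and $C_n$ is from Proposition \ref{key}. For $n$ large enough that $r_n\le\min(r_0,\delta(K))$ and any $y\in K$ with $|y-y^*|\le r_n$, integrating $\nabla f_n$ along the rectifiable curve in $K$ of length $\le L|y-y^*|$ furnished by hypothesis 2 yields $|f_n(y)-f_n(y^*)|\le LC_nr_n\|f_n\|_K=\tfrac12\|f_n\|_K$, so $f_n(y)\ge\tfrac12\|f_n\|_K$ on $B(y^*,r_n)\cap K$. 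The mass-density condition then gives $\int_K f_n\,d\mu\ge\tfrac12 c r_n^T\|f_n\|_K$, i.e.\ \eqref{BMQ} with $M_n=2(2LC_n)^T/c$, and $M_n^{1/n}\to 1$ since $C_n^{1/n}\to 1$.

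Next I would repeat the proof of Proposition \ref{key} for Lipschitz $Q$ (with Lipschitz constant $L_Q$), using the rescaling
\[F_n^Q(z):=\exp\Bigl(-\sum_{j=2}^n|z-\lambda x_j|^{-\alpha}-2n\lambda^{-\alpha}Q(z/\lambda)\Bigr)\text{ on }\lambda K,\]
which still satisfies $F_n^Q(\lambda y)=f_n^Q(y)^{\lambda^{-\alpha}}$. The same trick $\exp(-\sum|z-\lambda x_j|^{-\alpha})\le\exp(-|z-\lambda x_j|^{-\alpha})$ used in Proposition \ref{key} produces the pointwise bound
\[|\nabla F_n^Q(z)|\le\sqrt d\,e^{2n\lambda^{-\alpha}\|Q\|_K}\bigl[\alpha(n-1)M_\alpha+2n\lambda^{-\alpha-1}L_Q\bigr].\]
Inserting this into the scaling identity for $\nabla f_n^Q$ and setting $u=\lambda^{-\alpha}$, the quantity to minimize becomes $u^{-(\alpha+1)/\alpha}e^{uB}$ (up to the bracket) with $B=2n\|Q\|_K-\log\|f_n^Q\|_K$. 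The lower bound $\|f_n^Q\|_K\ge\|f_n\|_K e^{-2n\|Q\|_K}$ combined with Lemma \ref{minor} forces $B$ to be polynomial in $n$, and the $L_Q$-term in the bracket contributes only polynomially because $u^{(\alpha+1)/\alpha}\sim B^{-(\alpha+1)/\alpha}$ at the optimum. The outcome is $\|\nabla f_n^Q\|_{2,K}\le\tilde C_n\|f_n^Q\|_K$ with $\tilde C_n$ polynomial in $n$, so the unweighted argument applied verbatim to $f_n^Q$ and $\tilde C_n$ in place of $f_n$ and $C_n$ gives \eqref{BMQ} for Lipschitz $Q$ with $M_n^{1/n}\to 1$.

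Finally, for general $Q\in C(K)$, extend $Q$ to $\R^d$ via Tietze and mollify to produce, for each $\epsilon>0$, a Lipschitz $Q_\epsilon$ with $\|Q-Q_\epsilon\|_K<\epsilon$. Pointwise $e^{-2n\epsilon}f_n^{Q_\epsilon}\le f_n^Q\le e^{2n\epsilon}f_n^{Q_\epsilon}$, so the Lipschitz case applied to $Q_\epsilon$ with constant $M_n^{(\epsilon)}$ yields $M_n(Q)\le e^{4n\epsilon}M_n^{(\epsilon)}$, whence $\limsup_n M_n(Q)^{1/n}\le e^{4\epsilon}$; combined with the trivial lower bound $M_n\ge 1/\mu(K)$ and letting $\epsilon\to 0$, we get $M_n^{1/n}\to 1$. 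The main obstacle is getting the polynomial-in-$n$ Bernstein constant $\tilde C_n$ for $f_n^Q$ itself in the Lipschitz case, as without it the diagonal approximation in the last step cannot even begin.
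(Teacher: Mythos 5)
Your proposal is correct and would yield the theorem, but it takes a genuinely different route from the paper, and the motivation you give for that detour rests on a misreading of what the modulus-of-continuity argument actually looks like. The paper does \emph{not} use the global comparison $\|f_n\|_K/f_n(w)\leq e^{4n\|Q\|_K}$ that you attribute to it. Instead, it picks $w$ maximizing $f_n^Q=f_ne^{-2nQ}$, fixes $\epsilon>0$, and uses uniform continuity of $Q$ to find a fixed radius $\delta$ (independent of $n$) with $\mathrm{osc}_{B(w,\delta)}Q\leq\epsilon$; maximality of $w$ for $f_n^Q$ then gives the \emph{local} comparison $\|f_n\|_{K\cap B(w,\delta)}\leq f_n(w)e^{2n\epsilon}$. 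Applying Proposition \ref{key} on the compact set $K\cap\overline{B(w,\delta)}$ (this is where the local box-counting hypothesis is used), integrating $\nabla f_n$ along the quasiconvex curve, and then shrinking to $B(w,e^{-3n\epsilon})$ gives $f_n\geq\frac12 f_n(w)$ there; the mass-density condition yields $M_n\leq\frac{2}{c}e^{n\epsilon(2+3T)}$ for $n$ large. Since $\epsilon>0$ was arbitrary, $M_n^{1/n}\to1$. No $e^{O(n\|Q\|_K)}$ factor appears; the only price is $e^{O(n\epsilon)}$, which is killed by letting $\epsilon\to0$.

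Your alternative, by contrast, first extends Proposition \ref{key} to the weighted family $\mathcal P_n^Q$ for Lipschitz $Q$, getting a polynomial Bernstein constant $\tilde C_n$, and then runs the ``fixed fraction of the ball'' argument on $f_n^Q$ directly with radius $r_n=1/(2L\tilde C_n)$, so that no local oscillation comparison is needed; a Tietze--mollification step handles general continuous $Q$. This is a valid proof. The computation of the weighted Bernstein constant (the rescaling $F_n^Q(\lambda y)=f_n^Q(y)^{\lambda^{-\alpha}}$, the constraint $u=\lambda^{-\alpha}\leq1$, and the polynomial bound on $B=2n\|Q\|_K-\log\|f_n^Q\|_K$ coming from Lemma \ref{minor} together with $\|f_n^Q\|_K\geq\|f_n\|_Ke^{-2n\|Q\|_K}$) all go through. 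One small point worth cleaning up: for the integration along the curve you need $f_n^Q$ absolutely continuous along the curve, which is immediate if you mollify to $C^1$ approximants $Q_\epsilon$ rather than merely Lipschitz ones (Rademacher gives $\nabla Q$ only a.e.\ in $\R^d$, not along a given $1$-dimensional curve), so you should state the approximation with smooth $Q_\epsilon$. Comparing the two approaches: the paper's is shorter and uses only the unweighted Proposition \ref{key}, at the cost of needing the localized version of that estimate (hence the local box-counting hypothesis); yours avoids localizing the Bernstein estimate but pays with an extra weighted Bernstein inequality and an approximation layer.
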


\begin{proof} %We take $\delta>0$ as above (and so that 1. holds). 
Fix $f_n^Q=f_ne^{-2nQ} \in \mathcal P_n^Q$ and let $w\in K$ be a point with
$$||f_ne^{-2nQ}||_K = f_n(w)e^{-2nQ(w)}.$$
Given $\epsilon >0$ sufficiently small, there is a $\delta>0$ such that
$$|Q(a)-Q(b)| \leq \epsilon \ \hbox{if} \ a,b\in K \ \hbox{with} \ |a-b|<\delta.$$
We have
$$f_n(w)e^{-2nQ(w)} \geq  f_n(x)e^{-2nQ(x)} \ \hbox{for all} \ x\in K.$$
Thus if $|x-w|<\delta$ we have 
$$f_n(w)e^{2n\epsilon} \geq f_n(x)$$
and hence
\begin{equation}\label{easy} ||f_n||_{B(w,\delta)}\leq f_n(w)e^{2n\epsilon}.\end{equation}

For $x\in B(w,\delta)$, 
$$f_n(x)- f_n(w) = \int_0^1 \nabla f_n (r(t))r'(t)dt$$
where $t\to r(t)$ is a smooth curve joining $w$ to $x$ as above. Applying Lemma \ref{key} in $B(w,\delta)$, 
$$|f_n(x)- f_n(w)| \leq || \nabla f_n ||_{2,B(w,\delta)} \int_0^1 \|r'(t)\|_{2}dt$$
$$\leq || \nabla f_n ||_{2,B(w,\delta)}L|x-w|\leq C_n L|x-w|\cdot ||f_n||_{B(w,\delta)}.$$
Now for $n$ large so that $e^{-3n\epsilon}< \min(\delta,r_0)$ and $C_nL\cdot e^{-n\epsilon}<1/2$, for $x\in B(w,e^{-3n\epsilon})\subset B(w,\delta)$ we may apply this estimate together with (\ref{easy}) to conclude that
$$|f_n(x)- f_n(w)| \leq C_nL\cdot e^{-3n\epsilon}\cdot ||f_n||_{B(w,\delta)}\leq C_nL\cdot e^{-3n\epsilon}\cdot f_n(w)e^{2n\epsilon}\leq \frac{1}{2}f_n(w).$$
Hence
$$f_n(x)\geq \frac{1}{2} f_n(w) \ \hbox{for large} \ n \ \hbox{if} \ x\in B(w,e^{-3n\epsilon}).$$
Finally,
 $$%||f_ne^{-2nQ}||_{L^1(\mu)}=
 \int_K f_n(x)e^{-2nQ(x)} d\mu(x)\geq \int_{B(w,e^{-3n\epsilon})}f_n(x)e^{-2nQ(x)} d\mu(x)$$
 $$\geq \frac{1}{2}f_n(w)e^{-2nQ(w)}e^{-2n\epsilon}\mu(B(w,e^{-3n\epsilon}))$$
 $$\geq \frac{1}{2}f_n(w)e^{-2nQ(w)}e^{-2n\epsilon}ce^{-3nT\epsilon}\geq ||f_ne^{-2nQ}||_K\cdot \frac{c}{2}e^{-n\epsilon(2+3T)}.$$
\end{proof}

\begin{remark} \label{strongBM} We call a measure $\mu$ satisfying (\ref{BMQ}) for each $Q\in C(K)$ a {\it strong Bernstein-Markov measure} (for Riesz potentials) on $K$. As an example, for $K$ a smooth, compact $m-$dimensional submanifold of $\R^d$, the Hausdorff $m-$measure (or equivalently its volume form) is a strong Bernstein-Markov measure on $K$. A special case of the results in this section was proved in \cite{DRNA}.
\end{remark}

%{\red It is unclear how much we may weaken the hypothesis that $K$ be a smooth, compact $m-$dimensional submanifold of $\R^d$.}

\section{Free energy asymptotics and a.s. convergence}

Let $K\subset \R^d$ be compact and of positive Riesz capacity; $Q\in C(K)$; and fix a measure $\mu$ on $K$ satisfying (\ref{BMQ}). For each $n=2,3,...$, define
$$Z_n=Z_n(K,Q,\mu):= \int_{K^n} VDM_n^Q(x_1,x_2,...,x_n)d\mu(x_1) \cdots d\mu(x_n).$$

\begin{proposition}\label{znasymp} With $K,Q$ and $\mu$ as above,
\begin{equation}\label{zneqn} 
\lim_{n\to \infty} Z_n^{1/n^2}= \delta^Q(K)=\exp {(-V_w)}.
\end{equation}

\end{proposition}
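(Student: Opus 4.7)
The plan is to pin $Z_n^{1/n^2}$ between two quantities both converging to $\delta^Q(K)=\exp(-V_w)$. Part 2 of Theorem \ref{sec3} already identifies $\lim_n \delta_n^Q(K) = \exp(-V_w)$, so the task reduces to comparing the integral $Z_n$ with the supremum $(\delta_n^Q(K))^{n^2}$ of its integrand.

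The upper bound is immediate from the definition of $\delta_n^Q(K)$: pointwise on $K^n$ one has $VDM_n^Q \leq (\delta_n^Q(K))^{n^2}$, so $Z_n \leq (\delta_n^Q(K))^{n^2}\mu(K)^n$, hence $Z_n^{1/n^2} \leq \delta_n^Q(K)\cdot \mu(K)^{1/n}$. Since $\mu(K)^{1/n}\to 1$, this yields $\limsup_n Z_n^{1/n^2}\leq \delta^Q(K)$.

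The lower bound is where Theorem \ref{bmq} enters. The key observation, recorded at the start of Section 3, is that freezing all variables except $x_k$ turns $VDM_n^Q$ into a constant (depending on the other variables and on $Q$) times an element of $\mathcal P_n^Q$ in the free variable $x_k$. Applying (\ref{BMQ}) in the $k$-th slot then gives, uniformly in the frozen variables,
$$\sup_{x_k\in K} VDM_n^Q(x_1,\dots,x_n)\leq M_n\int_K VDM_n^Q(x_1,\dots,x_n)\, d\mu(x_k).$$
Starting from a weighted Fekete configuration $(x_1^*,\dots,x_n^*)$, which realizes $VDM_n^Q(x_1^*,\dots,x_n^*)=(\delta_n^Q(K))^{n^2}$, I would iterate this bound through the $n$ coordinates in turn: replace $x_1^*$ by $\int_K \cdots\, d\mu(x_1)$, then inside that integral bound the integrand by $M_n\int_K \cdots\, d\mu(x_2)$, and so on. After $n$ rounds,
$$(\delta_n^Q(K))^{n^2}\leq M_n^n\int_{K^n} VDM_n^Q\, d\mu(x_1)\cdots d\mu(x_n)=M_n^n\, Z_n.$$
Extracting the $n^2$-th root gives $Z_n^{1/n^2}\geq \delta_n^Q(K)/M_n^{1/n}$, and $M_n^{1/n}\to 1$ from Theorem \ref{bmq} yields $\liminf_n Z_n^{1/n^2}\geq \delta^Q(K)$, matching the upper bound.

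The only delicate point—what I would call the main obstacle—is the bookkeeping: one must verify that the constant $M_n$ in (\ref{BMQ}) is genuinely independent of the frozen variables (so that the iterated substitution is legitimate in every slot), and that the aggregate factor $M_n^n$ does not spoil the $n^2$-th root. Both facts are secured by the statement of Theorem \ref{bmq}: the constant $M_n$ depends only on $n$ and $Q$, not on the particular $f_n^Q\in\mathcal P_n^Q$, and $M_n^{1/n}\to 1$. Beyond this the argument is a routine free-energy estimate, with no genuine new difficulty arising.
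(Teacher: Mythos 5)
Your proposal is correct and follows essentially the same route as the paper's own proof: the upper bound $Z_n\leq(\delta_n^Q(K))^{n^2}\mu(K)^n$ is the same trivial estimate, and the lower bound is obtained by exactly the paper's iteration, freezing all but one coordinate of a weighted Fekete configuration so that $VDM_n^Q$ becomes (up to a positive constant) an element of $\mathcal P_n^Q$ in the free slot, then applying the strong Bernstein--Markov inequality (\ref{BMQ}) $n$ times to accumulate the factor $M_n^n$. You correctly flag the two points that make the iteration legitimate, namely that $M_n$ is uniform over $\mathcal P_n^Q$ (hence independent of the frozen variables) and that $M_n^{1/n}\to 1$ absorbs the accumulated factor after taking the $n^2$-th root.
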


\begin{proof} %We apply Theorem \ref{bmq}. 
We make use of inequality (\ref{BMQ}).
Fix a set of $n$ points $a_1,...,a_n\in K$ with 
$$\max_{x_1,...,x_n\in K}VDM_n^Q(x_1,x_2,...,x_n)=VDM_n^Q(a_1,a_2,...,a_n).$$
The function
$$y\to VDM_n^Q(y,a_2,...,a_n)$$
is, up to a multiplicative constant, a function in $\mathcal P_n^Q$ which attains its maximum value on $K$ at $y=a_1$. %Applying Theorem \ref{bmq}, 
Using (\ref{BMQ}),
$$VDM_n^Q(a_1,a_2,...,a_n)\leq M_n \int_K VDM_n^Q(y,a_2,...,a_n)d\mu(y).$$
Now consider, for each fixed $y\in K$, 
$$z\to VDM_n^Q(y,z,a_3,...,a_n).$$
This is, up to a multiplicative constant, a function in $\mathcal P_n^Q$; and
$$VDM_n^Q(y,a_2,a_3,...,a_n)\leq \max_{z\in K}VDM_n^Q(y,z,a_3,...,a_n).$$
%Applying Theorem \ref{bmq}, 
Using (\ref{BMQ}) again,
$$VDM_n^Q(y,a_2,a_3,...,a_n)\leq M_n \int_K VDM_n^Q(y,z,a_3,...,a_n)d\mu(z)$$
which gives
$$VDM_n^Q(a_1,a_2,...,a_n)\leq M_n^2\int_K \int_K VDM_n^Q(y,z,a_3,...,a_n)d\mu(z)d\mu(y).$$
Repeating this argument $n-2$ times gives
$$VDM_n^Q(a_1,a_2,...,a_n)\leq M_n^n Z_n.$$
On the other hand,
$$Z_n \leq VDM_n^Q(a_1,a_2,...,a_n) \mu(K)^n.$$
Combining these last two displayed inequalities with 2. of Theorem \ref{sec3} and the fact that $M_n^{1/n}\to 1$ gives the result.

\end{proof}

We define a probability measure $Prob_n$ on $K^{n}$ as follows: for a Borel set $A\subset K^{n}$,
\begin{equation}\label{probn}Prob_n(A):=\frac{1}{Z_n}\cdot \int_A  |VDM_n^Q(x_1,...,x_n)|^2  d\mu(x_1)\cdots d\mu(x_n).
\end{equation}
This coincides with (\ref{probnint}). From Proposition \ref{znasymp} we obtain the following estimate.

\begin{corollary} \label{largedev} With $K,Q$ and $\mu$ as above, given $\eta >0$, define
 \begin{equation}\label{aketa}
 A_{n,\eta}:=\{(x_1,...,x_n)\in K^{n}: VDM_n^Q(x_1,...,x_n) \geq 
 ( \delta^Q(K) -\eta)^{n^2}\}.
 \end{equation}
There exists $n^*=n^*(\eta)$ such that for all $n>n^*$, 
$$Prob_n(K^{n}\setminus A_{n,\eta})\leq \Big(1-\frac{\eta}{2 \delta^Q(K)}\Big)^{n^2}\mu(K)^{n}.$$
\end{corollary}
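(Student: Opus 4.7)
The plan is to combine a crude pointwise upper bound for $VDM_n^Q$ on the complement of $A_{n,\eta}$ with the asymptotic $Z_n^{1/n^2}\to\delta^Q(K)$ from Proposition~\ref{znasymp}. Note first that $VDM_n^Q$ is a positive function (an exponential), so $Prob_n$ from (\ref{probn}) is really the probability measure with density $VDM_n^Q/Z_n$ with respect to $d\mu^n$, and we may freely assume $0<\eta<\delta^Q(K)$ (otherwise $A_{n,\eta}=K^n$ and the claim is vacuous).

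First I would split the integral defining $Prob_n$ and use the defining inequality of $A_{n,\eta}^c$: on $K^n\setminus A_{n,\eta}$ we have $VDM_n^Q(x_1,\dots,x_n)<(\delta^Q(K)-\eta)^{n^2}$. Integrating this crude bound against the finite product measure $\mu^{\otimes n}$ yields
\begin{equation*}
\int_{K^n\setminus A_{n,\eta}} VDM_n^Q\, d\mu(x_1)\cdots d\mu(x_n)\;\leq\;(\delta^Q(K)-\eta)^{n^2}\mu(K)^n,
\end{equation*}
and therefore
\begin{equation*}
Prob_n(K^n\setminus A_{n,\eta})\;\leq\;\frac{(\delta^Q(K)-\eta)^{n^2}\mu(K)^n}{Z_n}.
\end{equation*}

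Next I would lower-bound $Z_n$. Set
\begin{equation*}
a\;:=\;\frac{2\,\delta^Q(K)(\delta^Q(K)-\eta)}{2\,\delta^Q(K)-\eta},
\end{equation*}
chosen precisely so that $(\delta^Q(K)-\eta)/a=1-\eta/(2\delta^Q(K))$. A one-line algebraic check shows $a<\delta^Q(K)$ whenever $\eta>0$. By Proposition~\ref{znasymp}, $Z_n^{1/n^2}\to\delta^Q(K)>a$, so there exists $n^*=n^*(\eta)$ with $Z_n\geq a^{n^2}$ for all $n>n^*$. Plugging this back in gives
\begin{equation*}
Prob_n(K^n\setminus A_{n,\eta})\;\leq\;\left(\frac{\delta^Q(K)-\eta}{a}\right)^{n^2}\!\mu(K)^n\;=\;\left(1-\frac{\eta}{2\,\delta^Q(K)}\right)^{n^2}\!\mu(K)^n,
\end{equation*}
which is exactly the advertised bound.

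There is no real obstacle here; the proof is essentially bookkeeping around Proposition~\ref{znasymp}. The only subtlety is choosing the right intermediate constant $a$ so that the exponential rate matches $1-\eta/(2\delta^Q(K))$ rather than some looser constant: the factor of $2$ in the denominator is exactly what is needed to leave a definite gap between $a$ and $\delta^Q(K)$, ensuring the lower bound $Z_n\geq a^{n^2}$ eventually holds.
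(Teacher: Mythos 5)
Your argument is correct and is exactly what the paper intends: the paper states the corollary as an immediate consequence of Proposition~\ref{znasymp} without writing any detail, and your splitting of the integral, crude pointwise bound on $K^n\setminus A_{n,\eta}$, and eventual lower bound $Z_n\geq a^{n^2}$ from $Z_n^{1/n^2}\to\delta^Q(K)$ fills in precisely the missing bookkeeping. (The more common choice $a=\delta^Q(K)-\eta/2$ would also do, since $(\delta^Q(K)-\eta)/(\delta^Q(K)-\eta/2)\leq 1-\eta/(2\delta^Q(K))$; your $a$ just produces the advertised constant exactly.)
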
	
	
	We get the induced product probability measure ${\bf P}$ on the space of arrays on $K$, 
	$$\chi:=\{X=\{{\bf Z_{n}}:=(x_{n1},...,x_{nn})\in K^{n}\}_{n\geq 1}\},$$ 
	namely,
	$$(\chi,{\bf P}):=\prod_{n=1}^{\infty}(K^{n},Prob_n).$$ 
From standard arguments using the Borel-Cantelli lemma, we obtain: 

\begin{corollary}\label{416} With $K,Q$ and $\mu$ as above, for ${\bf P}$-a.e. array $X=\{{\bf Z_{n}}\}  \in \chi$, 
$$\mu_n:=\sum_{j=1}^n  \delta_{x_{nj}} \to \mu_{K,Q} \ \hbox{weak-* as } \ n \to\infty.$$
\end{corollary}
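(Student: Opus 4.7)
The plan is to deduce the almost-sure weak convergence from the tail bound of Corollary \ref{largedev} via a Borel-Cantelli argument, and then to pin down every subsequential weak-$*$ limit of the empirical measures as $\mu_{K,Q}$ using the semicontinuity estimate of Theorem \ref{sec3}(1) together with the uniqueness of the weighted equilibrium measure.

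First, fix $\eta>0$ with $\eta<2\delta^Q(K)$. Corollary \ref{largedev} gives
$$Prob_n(K^{n}\setminus A_{n,\eta})\leq\Big(1-\frac{\eta}{2\delta^Q(K)}\Big)^{n^2}\mu(K)^{n}$$
for all $n>n^*(\eta)$. Since the right-hand side decays like $e^{-cn^2}$ while $\mu(K)^n$ grows at most exponentially in $n$, the series $\sum_n Prob_n(K^{n}\setminus A_{n,\eta})$ converges. Applying the Borel-Cantelli lemma to the product measure ${\bf P}$ and the cylinder events $E_n^\eta:=\{X\in\chi:{\bf Z_n}\notin A_{n,\eta}\}$ yields a set $\chi_\eta\subset\chi$ of full ${\bf P}$-measure on which ${\bf Z_n}\in A_{n,\eta}$ for all $n$ large.

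Next, I would intersect over a countable sequence $\eta_k\downarrow 0$ to obtain $\chi_\infty:=\bigcap_{k\geq 1}\chi_{\eta_k}$, still of full ${\bf P}$-measure. For $X=\{{\bf Z_n}\}\in\chi_\infty$, the definition of $A_{n,\eta_k}$ forces $VDM_n^Q({\bf Z_n})^{1/n^2}\geq\delta^Q(K)-\eta_k$ for all sufficiently large $n$, so taking $k\to\infty$ gives
$$\liminf_{n\to\infty}VDM_n^Q(x_{n1},\ldots,x_{nn})^{1/n^2}\geq\delta^Q(K)=\exp(-I^Q(\mu_{K,Q})).$$

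To conclude, fix $X\in\chi_\infty$ and let $\mu\in\mathcal M(K)$ be any weak-$*$ subsequential limit of the empirical measures $\mu_n$, which exists by compactness of $\mathcal M(K)$. Applying Theorem \ref{sec3}(1) along that subsequence produces the matching upper bound $\exp(-I^Q(\mu))\geq\limsup VDM_n^Q(\cdot)^{1/n^2}\geq\delta^Q(K)$, whence $I^Q(\mu)\leq I^Q(\mu_{K,Q})=V_w$; uniqueness of the weighted equilibrium measure (Theorem \ref{frost}(2)) then forces $\mu=\mu_{K,Q}$. Since every subsequential limit equals $\mu_{K,Q}$, the whole sequence converges weakly to $\mu_{K,Q}$ on $\chi_\infty$. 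The argument is essentially routine; the only mild subtlety is the countable intersection that upgrades the $\eta$-by-$\eta$ Borel-Cantelli statement to a single full-measure event on which the $\liminf$ bound holds for every $\eta>0$ simultaneously.
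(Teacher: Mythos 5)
Your proof is correct and fills in precisely the ``standard Borel--Cantelli argument'' the paper alludes to. The mechanics are right: the tail bound $(1-\eta/2\delta^Q(K))^{n^2}\mu(K)^n$ is summable, Borel--Cantelli gives the a.s.\ eventual containment ${\bf Z_n}\in A_{n,\eta}$, and intersecting over $\eta_k\downarrow0$ yields a full-measure set on which $\liminf_n VDM_n^Q({\bf Z_n})^{1/n^2}\geq\delta^Q(K)$.

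One small simplification worth noting: once you have that liminf bound, you could invoke Theorem~\ref{sec3}(3) directly. Indeed the matching upper bound $\limsup_n VDM_n^Q({\bf Z_n})^{1/n^2}\leq\delta^Q(K)$ is automatic from $VDM_n^Q({\bf Z_n})\leq(\delta_n^Q(K))^{n^2}$ and Theorem~\ref{sec3}(2), so the limit exists and equals $\exp(-V_w)$, and part (3) concludes $\mu_n\to\mu_{K,Q}$ weakly. Your final paragraph instead re-derives the content of Theorem~\ref{sec3}(3) from parts (1) and (2) together with uniqueness, which is the same proof unwound; both are correct. The only care required in your version, which you handle correctly, is that Theorem~\ref{sec3}(1) must be applied along the converging subsequence, and the liminf over the full sequence transfers to a lower bound for the limsup along that subsequence.

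As an aside, the paper's statement of the corollary has a harmless typographical slip: $\mu_n$ should read $\tfrac{1}{n}\sum_{j=1}^n\delta_{x_{nj}}$, as you correctly use.
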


Define the probability measures ({\it one-point correlation functions}) 
$$d\tau_n(x):=\frac {1}{Z_n} R_1^{(n)}(x)d\mu(x)$$ where 
$$R_1^{(n)}(x):=\int_{K^{n-1}} |VDM_n^Q(x,x_2,...,x_n)|^2 d\mu(x_2) \cdots d\mu(x_n).$$
Using Corollary \ref{416}, we get the following deterministic result.

\begin{corollary}\label{417} With $K,Q$ and $\mu$ as above, 
$$\tau_n \to \mu_{K,Q} \ \hbox{as } \ n \to\infty.$$
\end{corollary}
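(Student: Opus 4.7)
The plan is to deduce this deterministic result from the almost-sure convergence in Corollary \ref{416} by identifying $\tau_n$ as the expected empirical measure under $Prob_n$ and then invoking bounded convergence.

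First I would unpack the definition of $\tau_n$. Since the density $|VDM_n^Q(x_1,\ldots,x_n)|^2$ defining $Prob_n$ is symmetric in its arguments, $R_1^{(n)}(x) d\mu(x)/Z_n$ is exactly the first marginal of $Prob_n$. By symmetry,
\begin{equation*}
\int_K \phi \, d\tau_n \;=\; \mathbb E_{Prob_n}[\phi(x_1)] \;=\; \mathbb E_{Prob_n}\!\left[\tfrac{1}{n}\sum_{j=1}^n \phi(x_j)\right] \;=\; \mathbb E_{Prob_n}\!\left[\int_K \phi \, d\mu_n\right]
\end{equation*}
for every $\phi \in C(K)$, where $\mu_n = \tfrac{1}{n}\sum_j \delta_{x_j}$ is the empirical measure. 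Thus $\tau_n$ is precisely the mean measure of the random empirical measure $\mu_n$ under $Prob_n$.

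Next I would pass to the product space $(\chi, \mathbf P) = \prod_n (K^n, Prob_n)$ from the previous subsection. By Corollary \ref{416}, for $\mathbf P$-a.e.\ array $X = \{\mathbf Z_n\}$, the empirical measures $\mu_n(X) := \tfrac{1}{n}\sum_j \delta_{x_{nj}}$ converge weakly to $\mu_{K,Q}$, so $\int \phi \, d\mu_n(X) \to \int \phi \, d\mu_{K,Q}$ almost surely. Since $K$ is compact and $\phi \in C(K)$, the random variables $\int \phi \, d\mu_n$ are uniformly bounded by $\|\phi\|_K$. The bounded convergence theorem applied on $(\chi, \mathbf P)$ then yields
\begin{equation*}
\int_K \phi\, d\tau_n \;=\; \mathbb E_{Prob_n}\!\left[\int_K \phi\, d\mu_n\right] \;=\; \mathbb E_{\mathbf P}\!\left[\int_K \phi\, d\mu_n\right] \;\longrightarrow\; \int_K \phi\, d\mu_{K,Q},
\end{equation*}
where in the middle equality I use that the $n$-th marginal of $\mathbf P$ is $Prob_n$. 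Since this holds for every $\phi \in C(K)$ and $K$ is compact, $\tau_n \to \mu_{K,Q}$ weakly.

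I do not expect a serious obstacle here, as the content really sits in the earlier almost-sure statement (Corollary \ref{416}), which in turn rests on Proposition \ref{znasymp} and Borel-Cantelli via Corollary \ref{largedev}. The one point requiring a moment of care is the first step: making sure the density $|VDM_n^Q|^2$ is genuinely symmetric in $(x_1,\ldots,x_n)$ (it is, because both the two-body sum $\sum_{i\ne j} W(x_i - x_j)$ and the one-body sum $\sum_j Q(x_j)$ are manifestly symmetric), so that $\tau_n$ can be rewritten as the expectation of $\mu_n$. Everything else is a direct application of bounded convergence on a product probability space.
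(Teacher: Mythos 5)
Your proof is correct and takes essentially the same approach as the paper: identify $\tau_n$ via symmetry as the mean of the empirical measure $\mu_n$ under $Prob_n$, pass to the product space $(\chi,\mathbf P)$, and deduce convergence of expectations from the almost-sure convergence in Corollary \ref{416} plus uniform boundedness. The only cosmetic difference is that you invoke the bounded convergence theorem by name, whereas the paper runs that argument by hand, splitting $K^n$ into the set $F_n$ where $|\int f\,d\mu_n-\int f\,d\mu_{K,Q}|\ge\epsilon$ and its complement and using $Prob_n(F_n)=\mathbf P(E_n)\le\mathbf P(\cup_{m\ge n}E_m)\downarrow 0$.
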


\begin{proof} For $f\in C(K)$, we show $\int_K f d\tau_n\to  \int_Kfd\mu_{K,Q}$. Writing ${\bf Z_{n}}=\{x_{nj}\}$ and $ \mu_n=\frac{1}{n}\sum_{j=1}^{n} \delta_{x_{nj}}$, given $\epsilon >0$, let 
$$E_n:=\{X=\{{\bf Z_{n}}\}\in \chi: |\int_K fd\mu_n-\int_K fd\mu_{K,Q}|\geq \epsilon\}$$
and $F_n:=\{{\bf Z_n}\in K^{n}: |\int_K fd\mu_n-\int_K fd\mu_{K,Q}|\geq \epsilon$\}. From Corollary \ref{416},
$$Prob_n(F_n)={\bf P}(E_n)\leq {\bf P}(\cup_{m\geq n}E_m)\downarrow 0.$$
Hence $Prob_n(F_n)<\epsilon$ for $n$ large. Splitting $K^n$ into $F_n$ and $K^{n}\setminus F_n$ we obtain
$$|\int_{K^{n}}\bigl(\int_K fd\mu_n-\int_K fd\mu_{K,Q}\bigr)dProb_n({\bf Z_n})|< \epsilon + 2||f||_K \cdot \epsilon.$$
Thus
$$\int_{K^{n+1}}\int_K fd\mu_ndProb_n({\bf Z_n})\to \int_Kfd\mu_{K,Q}.$$
Using $\mu_n=\frac{1}{n}\sum_{j=1}^{n} \delta_{x_{nj}}$,
$$\int_{K^{n}}\int_K fd\mu_n dProb_n({\bf Z_n})=$$
$$\frac{1}{Z_n}\int_{K^{n}} \bigl(\frac{1}{n}\sum_{j=1}^{n} f(x_{nj})\bigr)|VDM_n^Q(x_{n1},...,x_{nn})|\prod_{j=1}^n d\mu(x_{nj})$$
$$= \frac{1}{Z_n}\int_{K}f(x_{n1}) \bigl(\int_{K^{n-1}}|VDM_n^Q(x_{n1},...,x_{nn})|\prod_{j=2}^n d\mu(x_{nj})\bigr)d\mu(x_{n1})$$
$$= \frac{1}{Z_n}\int_{K}f(x_{n1}) R_1^{(n)}(x_{n1}) d\mu(x_{n1}) =\int_K f d\tau_n.$$

\end{proof}
\section{Large deviation principle} We will need an approximation lemma to prove our large deviation result. 

\begin{lemma}\label{lemma-scal}
Let $K\subset \R^d$ be compact and of positive Riesz capacity and let $\tau\in\MM(K)$ with 
$I(\tau)<\infty$. There exist an increasing sequence of compact sets $K_m$ in $K$, a sequence of functions $\{Q_m\}\subset C(K)$, and a sequence of measures $\tau_m\in \MM(K_m)$ satisfying
\begin{enumerate}%[noitemsep,nolistsep]
\item the measures $\tau_m$ tend weakly to $\tau$, as $m\to\infty$; 
\item the energies $I(\tau_m)$ tend to $I(\tau)$ as $m\to\infty$;
\item  the measures $\tau_m$ are equal to the weighted equilibrium measures $\tau_{K,Q_m}$.
\end{enumerate}
\end{lemma}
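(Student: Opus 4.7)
My strategy is to take $\tau_m$ as the normalized restriction of $\tau$ to a carefully chosen increasing sequence of compact sets $K_m \subseteq K$, and then to set $Q_m := -U^{\tau_m}$. Remark~\ref{needed} (equivalently, Proposition~\ref{frostchar} with $C = 0$ and $P = \emptyset$) then immediately identifies $\tau_m$ with the weighted equilibrium measure $\tau_{K,Q_m}$, giving item~3, provided I can arrange that $Q_m \in C(K)$. The heart of the matter is thus showing that the potentials $U^{\tau_m}$ are continuous on $\R^d$.

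Since $I(\tau) < \infty$, Fubini gives $U^\tau \in L^1(\tau)$, so $U^\tau$ is Borel and finite $\tau$-a.e. A standard Lusin-and-inner-regularity argument produces an increasing sequence of compacta $K_1 \subseteq K_2 \subseteq \cdots \subseteq K$ with $\tau(K \setminus K_m) \to 0$ and $U^\tau$ continuous on each $K_m$. Concretely, given $K_m$, I would use Lusin's theorem to obtain a compact $F \subseteq K$ with $U^\tau|_F$ continuous and $\tau(K \setminus F)$ small, then apply inner regularity to the open set $F \setminus K_m$ to extract a compact $L_{m+1} \subseteq F \setminus K_m$ with $\tau((F \setminus K_m)\setminus L_{m+1})$ small, and set $K_{m+1} := K_m \cup L_{m+1}$. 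Since $K_m$ and $L_{m+1}$ are disjoint compacts they are separated by a positive distance, so continuity of $U^\tau$ on each piece implies continuity on the union.

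Putting $\sigma_m := \tau|_{K_m}$, I next promote continuity of $U^\tau|_{K_m}$ to continuity of $U^{\sigma_m}$ itself. On $K_m$ one has the decomposition
\[
U^{\sigma_m}(x) \;=\; U^\tau(x) \;-\; U^{\tau|_{K\setminus K_m}}(x),
\]
where the first term is continuous by construction and the second is lower semicontinuous as a Riesz potential of a positive measure. Hence $U^{\sigma_m}$ is upper semicontinuous on $K_m \supseteq \supp \sigma_m$; combined with the always-valid lower semicontinuity this makes $U^{\sigma_m}$ continuous on $\supp \sigma_m$, and the continuity principle (property~5 in Section~2) then extends this to continuity on all of $\R^d$. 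Now put $\tau_m := \sigma_m/\sigma_m(K)$ and $Q_m := -U^{\tau_m} \in C(K)$; item~3 follows as announced. For item~1, $\sigma_m \uparrow \tau$ as set functions and $\sigma_m(K) \to 1$ give $\tau_m \to \tau$ weakly by dominated convergence against any $f \in C(K)$. For item~2, monotone convergence applied to the positive kernel $|x-y|^{-\alpha}$ gives $I(\sigma_m) \uparrow I(\tau)$, whence $I(\tau_m) = I(\sigma_m)/\sigma_m(K)^2 \to I(\tau)$.

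The principal obstacle is the combined requirement that the $K_m$ be simultaneously increasing, Lusin-continuity sets for $U^\tau$, and such that the \emph{restricted} potential $U^{\sigma_m}$ (rather than merely $U^\tau|_{K_m}$) is continuous. The inductive disjoint-compact construction handles the first two requirements, since disjoint compacta in a metric space are separated by a positive distance; the third follows from the lower semicontinuity of $U^{\tau|_{K\setminus K_m}}$ together with the continuity principle, and is precisely what makes $Q_m$ continuous rather than merely upper semicontinuous, as the lemma requires.
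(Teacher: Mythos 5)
Your proof is correct and follows essentially the same route as the paper's: Lusin's theorem to get compacta $K_m$ on which $U^\tau$ is continuous, the decomposition $U^{\tau|_{K_m}} = U^\tau - U^{\tau|_{K\setminus K_m}}$ to upgrade lower semicontinuity of the restricted potential to continuity on $K_m$, the continuity principle of Riesz potentials to extend continuity to all of $\R^d$, normalization, monotone convergence for the energies, and Remark~\ref{needed} to identify $\tau_m = \tau_{K,Q_m}$. The only cosmetic difference is your inductive disjoint-compact construction of the increasing $K_m$; the paper simply asserts one may take the Lusin sets increasing (e.g.\ by intersecting tails of a sequence chosen with $\tau(K\setminus F_j)\leq 2^{-j}$), which accomplishes the same thing more briefly.
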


\begin{proof} 
By Lusin's continuity theorem applied in $K$, for every integer $m\geq 1$, there exists a compact subset $K_m$ of $K$ such that $\tau(K\setminus K_m)\leq 1/m$ and $U^{\tau}|_{K_m}$ is continuous on $K_m$. We may assume that $K_m$ is increasing as $m$ tends to infinity. Then the measures $\tilde\tau_m:=\tau_{|K_m}$ are increasing and tend weakly to $\tau$. We have
$$\chi_m(x,y)\frac{1}{|x-y|^{\alpha}}\uparrow \frac{1}{|x-y|^{\alpha}} \ \hbox{as} \ m\to\infty$$
$(\tau\times\tau)$-almost everywhere on $K\times K$ where $\chi_m(x,y)$ is the characteristic function of $K_m\times K_m$ and we agree that the left-hand sides vanish when $x=y\notin K_{m}$. Hence, by monotone convergence we have
$$I(\tilde\tau_m)\to I(\tau) \quad\text{as }m\to\infty.$$

Next, define $\tau_m:= \tilde \tau_m/\tau(K_m)$. Clearly we have $I(\tau_m)\to I(\tau)$ since $I(\tilde\tau_m)\to I(\tau)$ and $\tau(K_m)\uparrow 1$. Define, for $x\in K$,
$$Q_m(x):= -U^{\tau_m}(x)=\frac{-1}{\tau(K_m)}\cdot U^{\tilde \tau_m}(x).$$ 
We first show $Q_{m}$ is continuous on $K_{m}$. Since $U^{\tau_m}$ is lower semicontinuous, it suffices to show it is upper semicontinuous. This follows since $U^{\tau-\tilde \tau_{m}}=U^{\tau}-U^{\tilde \tau_{m}}$ is lower semicontinuous (indeed, continuous) and $U^{\tau}$ is continuous on $K_{m}$. By the continuity property of Riesz potentials (\cite{La}, Theorem 1.7, p. 69, valid for all $0<\alpha <d$), we have $Q_m=-U^{\tau_m}$ is continuous on $\R^d$ (and in particular on $K$). Item 3. follows from Remark \ref{needed}. 
\end{proof}
We have all of the ingredients needed to follow the arguments of section 6 of \cite{LPLD} to prove the analogue of Theorem 6.6 there and hence a large deviation principle (Definition \ref{equivform} and Theorem \ref{ldp} below) which quantifies the statement of ${\bf P}$-a.e. convergence for arrays $X=\{{\bf Z_{n}}\}$ where ${\bf Z_{n}}=\{x_{nj}\}$ of $\frac{1}{n}\sum_{j=1}^n\delta_{x_{nj}}$ to $\mu_{K,Q}$. Given $G\subset {\mathcal M}(K)$, for each $n=1,2,...$ we let 
%$\tilde G_k$ be the subset of $K^{k}$ of elements
\begin{equation}\label{nbhddef}
\tilde G_n:=\{{\bf a} =(a_{1},...,a_{n})\in K^{n},~
\frac{1}{n}\sum_{j=1}^{{n}} \delta_{a_{j}}\in G\},
\end{equation}
and set
\begin{equation}\label{jkqmu}
J^Q_n(G):=\Big[\int_{\tilde G_{n}}|VDM^Q_n({\bf a})|d\nu ({\bf a})\Big]
^{ 1/n^2}.
\end{equation}
\begin{definition} \label{jwmuq} For $\mu \in \mathcal M(K)$ we define
$$\overline J^Q(\mu):=\inf_{G \ni \mu} \overline J^Q(G) \ \hbox{where} \ \overline J^Q(G):=\limsup_{n\to \infty} J^Q_n(G);$$
$$\underline J^Q(\mu):=\inf_{G \ni \mu} \underline J^Q(G) \ \hbox{where} \ \underline J^Q(G):=\liminf_{n\to \infty} J^Q_n(G)$$
where the infimum is taken over all open neighborhoods $G\subset \mathcal M(K)$ of $\mu$. If $Q=0$ we simply write $\overline J(\mu), \underline  J(\mu)$.
\end{definition}

We will only consider weights $Q\in C(K)$; thus the analogue of Lemma 6.3 in \cite{LPLD} is simpler:

\begin{lemma}\label{lem-J-JQ}
The following properties hold (and with the $\underline J,\underline J^Q$ 
functionals as well): 
\begin{enumerate}
\item $\overline J^Q(\mu)\leq e^{-I^Q(\mu)}$ for $Q\in C(K)$;
\item $ 
\overline J^{Q}(\mu)=\overline J(\mu)\cdot e^{-2\int_K Qd\mu}$  for $Q\in C(K)$. 
\end{enumerate}
\end{lemma}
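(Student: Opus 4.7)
The plan is to establish item 2 first by exact computation and then item 1 via truncation of the singular kernel; both parts go through verbatim for the $\underline J,\underline J^Q$ functionals, since the arguments rest on pointwise inequalities between $VDM_n^Q$ and suitable comparison quantities that survive $\liminf$ and $\limsup$ equally.

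For item 2, the exact factorization
$$VDM_n^Q(\mathbf{a}) = VDM_n(\mathbf{a})\cdot e^{-2n^2 \int_K Q\, d\mu_n},\qquad \mu_n := \tfrac{1}{n}\sum_{j=1}^n \delta_{x_j},$$
together with the weak continuity of $\nu\mapsto\int_K Q\, d\nu$ (which follows from $Q\in C(K)$), gives, for any $\epsilon>0$, an open neighborhood $G$ of $\mu$ on which $\bigl|\int_K Q\,d\nu-\int_K Q\,d\mu\bigr|<\epsilon$. On $\tilde G_n$ the exponential factor is then pinched between $e^{-2n^2(\int_K Q\,d\mu\pm\epsilon)}$. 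Integrating over $\tilde G_n$, taking the $1/n^2$-power, $\limsup_n$, $\inf_{G\ni\mu}$, and sending $\epsilon\to 0$ yields item 2.

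For item 1, I would reuse the continuous truncation $h_M(x,y):=\min(M,1/|x-y|^\alpha)$ from the proof of Theorem \ref{sec3}, together with the weakly continuous functional $F_M(\nu):=\int_K\int_K h_M(x,y)\, d\nu(x)\, d\nu(y) + 2\int_K Q\, d\nu$ on $\mathcal M(K)$. Separating the diagonal contribution exactly as in the proof of Theorem \ref{sec3} yields the pointwise lower bound
$$-\frac{1}{n^2}\log VDM_n^Q(\mathbf{a})\geq F_M(\mu_n) - \frac{M}{n}.$$
For any $L<I^Q(\mu)$ and $\epsilon>0$, monotone convergence $F_M(\mu)\uparrow I^Q(\mu)$ fixes $M$ with $F_M(\mu)\geq L$; weak continuity of this now-fixed $F_M$ supplies a neighborhood $G$ of $\mu$ on which $F_M(\nu)\geq L-\epsilon$; and taking $n$ large enough that $M/n<\epsilon$ produces the uniform bound $VDM_n^Q(\mathbf{a})\leq e^{-n^2(L-2\epsilon)}$ on all of $\tilde G_n$. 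Hence $J_n^Q(G)\leq\nu(K)^{1/n}e^{-(L-2\epsilon)}$, and passing $n\to\infty$, then $\inf_G$, then $\epsilon\to 0$, then $L\to I^Q(\mu)$, gives $\overline J^Q(\mu)\leq e^{-I^Q(\mu)}$. The same pointwise bound gives the $\underline J^Q$ version automatically.

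The main subtlety lies in item 1: because $F_M\uparrow I^Q$ only monotonically and the diagonal error $M/n$ grows linearly in $M$, the three parameters must be chosen in the right order --- first $M$ large (to approximate $I^Q(\mu)$, or to be arbitrarily large if $I^Q(\mu)=\infty$), next $G$ small around $\mu$ (using continuity of the now-fixed $F_M$), and only then $n$ large enough to absorb $M/n$. Once this tripled choice is arranged, the remaining estimates are routine.
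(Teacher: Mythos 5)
Your proposal is correct and follows exactly the approach the paper tacitly relies on: the paper offers no explicit proof, instead pointing to Lemma 6.3 of \cite{LPLD} and noting that the argument "is simpler" when $Q\in C(K)$, which is precisely the two simplifications you exploit. For item 2, the factorization $VDM_n^Q = VDM_n\cdot e^{-2n^2\int Q\,d\mu_n}$ together with the weak continuity of $\nu\mapsto\int Q\,d\nu$ (true exactly because $Q$ is continuous, which is why no analogue of the paper's Lemma 5.2 from \cite{LPLD} is needed) gives the two-sided pinching; for item 1, the truncation $h_M$ and the diagonal correction $M/n$ are lifted verbatim from the paper's own proof of Theorem~\ref{sec3}, item 1, and the order of quantifiers ($M$, then $G$, then $n$) is handled correctly. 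The only cosmetic caveat is that in item 2 the $\inf_{G\ni\mu}$ should really be taken over $G\cap G_\epsilon$ (or one should note the infimum is unchanged by restricting to subsets of $G_\epsilon$, by monotonicity of $G\mapsto \overline J^Q(G)$), but that is routine and does not affect the conclusion.
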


Following the steps in section 6 of \cite{LPLD} with Corollary 5.3 there replaced by our approximation result, Lemma \ref{lemma-scal}, we obtain equality of the $ \overline J^{Q}$ and $ \underline J^{Q}$ functionals for any admissible weight $Q$ provided $\nu$ is a strong Bernstein-Markov measure for $K$ (see Theorem 6.6 in \cite{LPLD}). Note because of 2. in Lemma \ref{lem-J-JQ} we do not need the general Lemma 5.2 of 
\cite{LPLD}.
 \begin{theorem} \label{rel-J-E} 
 Let $K\subset \R^d$ be a compact set of positive Riesz capacity. Let $\nu\in {\mathcal M}(K)$ be a strong Bernstein-Markov measure for $K$ (e.g., if $K$ and $\nu$ satisfy the hypotheses of Theorem \ref{bmq}). \\
 (i) For any $\mu\in \mathcal M(K)$, 
$$
 \log \overline J(\mu)= \log\underline  J(\mu)=-I(\mu).$$
(ii) Let $Q\in C(K)$. Then for any $\mu\in \mathcal M(K)$, 
$$
 \overline J^{Q}(\mu)=\overline J(\mu)\cdot e^{-2\int_K Qd\mu}
$$
(and with the $\underline J,\underline J^Q$ 
functionals as well) so that,
 \begin{equation}\label{minwtd}\log \overline J^Q(\mu)= \log \underline J^Q(\mu)=-I^{Q}(\mu).
\end{equation}
\end{theorem}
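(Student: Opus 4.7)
The strategy is to reduce (ii) to (i) and then to reduce (i) to proving the lower bound $\underline J(\mu)\geq e^{-I(\mu)}$. The identity $\overline J^Q(\mu)=\overline J(\mu)\cdot e^{-2\int_K Q\,d\mu}$ (and the analogue for $\underline J^Q$) is already Lemma \ref{lem-J-JQ}(2), and combined with $I^Q(\mu)=I(\mu)+2\int_K Q\,d\mu$ it converts (i) into (ii). The upper bound $\overline J(\mu)\leq e^{-I(\mu)}$ is Lemma \ref{lem-J-JQ}(1) with $Q\equiv 0$, and if $I(\mu)=\infty$ the lower bound is trivial, so we may assume $I(\mu)<\infty$.

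\vskip6pt

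\noindent First I would verify $\underline J^Q(\tau)=\overline J^Q(\tau)=e^{-I^Q(\tau)}$ at every measure of the form $\tau=\mu_{K,Q}$ with $Q\in C(K)$. Proposition \ref{znasymp} gives $Z_n^{1/n^2}\to e^{-I^Q(\tau)}$, while Corollary \ref{416} yields weak convergence of the empirical measures to $\tau$ under ${\bf P}$ almost surely, hence in probability, so that $Prob_n(\tilde G_n)\to 1$ for every open $G\ni\tau$ in $\mathcal M(K)$. Because $J_n^Q(G)^{n^2}=Z_n\cdot Prob_n(\tilde G_n)$ and $Prob_n(\tilde G_n)^{1/n^2}\to 1$, this gives $J_n^Q(G)\to e^{-I^Q(\tau)}$, and taking the infimum over $G$ forces both $\underline J^Q(\tau)$ and $\overline J^Q(\tau)$ to equal $e^{-I^Q(\tau)}$. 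For the specific weights $Q_m=-U^{\tau_m}$ produced by Lemma \ref{lemma-scal} one computes $\int_K Q_m\,d\tau_m=-I(\tau_m)$ and $I^{Q_m}(\tau_m)=-I(\tau_m)$, so Lemma \ref{lem-J-JQ}(2) converts the weighted equality into $\underline J(\tau_m)=\overline J(\tau_m)=e^{-I(\tau_m)}$.

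\vskip6pt

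\noindent To conclude for arbitrary $\mu\in\mathcal M(K)$ with $I(\mu)<\infty$, I would invoke Lemma \ref{lemma-scal}, which produces $\tau_m=\mu_{K,Q_m}$ with $Q_m\in C(K)$, $\tau_m\to\mu$ weakly, and $I(\tau_m)\to I(\mu)$. For any open $G\ni\mu$ weak convergence gives $\tau_m\in G$ for all large $m$, so $G$ is also a neighborhood of $\tau_m$ and
$$\underline J(G)\geq \underline J(\tau_m)=e^{-I(\tau_m)}.$$
Letting $m\to\infty$ yields $\underline J(G)\geq e^{-I(\mu)}$, and taking the infimum over all such $G$ completes (i); part (ii) then follows from Lemma \ref{lem-J-JQ}(2).

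\vskip6pt

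\noindent The main obstacle lies in the equilibrium step: one must convert the almost-sure weak convergence of empirical measures into the quantitative statement $Prob_n(\tilde G_n)\to 1$, and couple it with the sharp partition-function asymptotic to squeeze $J_n^Q(G)$ from below. Both ingredients rely on the strong Bernstein-Markov hypothesis on $\nu$ through Section 4. Once this is available, the propagation from $\tau_m$ to an arbitrary finite-energy $\mu$ is formal, since $\underline J(\mu)$ is defined as an infimum over neighborhoods and the target value $e^{-I(\mu)}$ is controlled by $\tau_m$-approximants of finite energy provided by Lemma \ref{lemma-scal}.
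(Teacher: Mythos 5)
Your proposal is correct and follows essentially the same path as the paper, which in turn defers the details to Section 6 of \cite{LPLD}: reduce (ii) to (i) via Lemma \ref{lem-J-JQ}(2), establish the lower bound at weighted equilibrium measures by combining Proposition \ref{znasymp} with the a.s.\ convergence in Corollary \ref{416}, and then propagate to arbitrary finite-energy $\mu$ through the approximation Lemma \ref{lemma-scal}. The only substantive ingredients you supply — the identity $J_n^Q(G)^{n^2}=Z_n\cdot Prob_n(\tilde G_n)$ and the computation $I^{Q_m}(\tau_m)=-I(\tau_m)$ — are exactly the mechanisms used in \cite{LPLD}, so this is a faithful reconstruction rather than an alternative route.
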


\noindent Thus we simply write $J,J^Q$ without an underline or overline.

Define 
$j_n:  K^{n} \to \mathcal M(K)$ via 
\begin{equation}\label{jk} j_n(x_1,...,x_{n})%=\kappa_k(x):
=\frac{1}{n}\sum_{j=1}^{n} \delta_{x_j}.
\end{equation}
The push-forward
$\sigma_n:=(j_n)_*(Prob_n) $ is a probability measure on $\mathcal M(K)$: for a Borel set $G\subset \mathcal M(K)$,
\begin{equation}\label{sigmak}
\sigma_n(G)=\frac{1}{Z_n} \int_{\tilde G_{n}} |VDM_n^Q(x_1,...,x_{n})| d\nu(x_1) \cdots d\nu(x_{n}).
\end{equation}

\begin{definition} \label{equivform}
The sequence $\{\sigma_n\}$ of probability measures on $\mathcal M(K)$ satisfies a {\bf large deviation principle} (LDP) with good rate function $\mathcal I$ and speed $\{s_n\}$ with $s_n\to \infty$ if for all 
measurable sets $\Gamma\subset \mathcal M(K)$, 
\begin{equation}\label{lowb}-\inf_{\mu \in \Gamma^0}\mathcal I(\mu)\leq \liminf_{n\to \infty} \frac{1}{s_n} \log \sigma_n(\Gamma) \ \hbox{and}\end{equation}
\begin{equation}\label{highb} \limsup_{n\to \infty} \frac{1}{s_n} \log \sigma_n(\Gamma)\leq -\inf_{\mu \in \bar \Gamma}\mathcal I(\mu).\end{equation}
\end{definition}

\noindent On $\mathcal M(K)$, to prove a LDP it suffices to work with a base for the weak topology. The following is a special case of a basic general existence result, Theorem 4.1.11 in \cite{DZ}.

\begin{proposition} \label{dzprop1} Let $\{\sigma_{\epsilon}\}$ be a family of probability measures on $\mathcal M(K)$. Let $\mathcal B$ be a base for the topology of $\mathcal M(K)$. For $\mu\in \mathcal M(K)$ let
$$\mathcal I(\mu):=-\inf_{\{G \in \mathcal B: \mu \in G\}}\bigl(\liminf_{\epsilon \to 0} \epsilon \log \sigma_{\epsilon}(G)\bigr).$$
Suppose for all $\mu\in \mathcal M(K)$,
$$\mathcal I(\mu):=-\inf_{\{G \in \mathcal B: \mu \in G\}}\bigl(\limsup_{\epsilon \to 0} \epsilon \log \sigma_{\epsilon}(G)\bigr).$$
Then $\{\sigma_{\epsilon}\}$ satisfies a LDP with rate function $\mathcal I(\mu)$ and speed $1/\epsilon$. 
\end{proposition}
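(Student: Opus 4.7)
The plan is to derive both LDP bounds directly from the two hypothesized identities, exploiting that $\mathcal M(K)$ is compact in the weak-$*$ topology (since $K$ is compact), so that every closed subset of $\mathcal M(K)$ is compact.

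First I would prove the lower bound (\ref{lowb}). Fix a measurable $\Gamma$ and $\mu\in\Gamma^{0}$. Because $\mathcal B$ is a base for the topology, there exists $G\in\mathcal B$ with $\mu\in G\subset\Gamma^{0}$, and hence $\sigma_{\epsilon}(\Gamma)\geq\sigma_{\epsilon}(G)$. From the first displayed identity of the proposition,
$$-\mathcal I(\mu)=\inf_{H\in\mathcal B,\ \mu\in H}\liminf_{\epsilon\to 0}\epsilon\log\sigma_{\epsilon}(H)\leq\liminf_{\epsilon\to 0}\epsilon\log\sigma_{\epsilon}(G).$$
Therefore $\liminf_{\epsilon\to 0}\epsilon\log\sigma_{\epsilon}(\Gamma)\geq -\mathcal I(\mu)$, and taking the supremum over $\mu\in\Gamma^{0}$ yields (\ref{lowb}).

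Next I would prove the upper bound (\ref{highb}). Set $\alpha:=\inf_{\mu\in\bar\Gamma}\mathcal I(\mu)$ and fix $\delta>0$ (if $\alpha=+\infty$, replace $-\alpha+\delta$ in what follows by an arbitrary $-N$, eventually letting $N\to\infty$). Using the \emph{second} hypothesized identity, for every $\mu\in\bar\Gamma$ one can select a base neighbourhood $G_{\mu}\in\mathcal B$ of $\mu$ such that
$$\limsup_{\epsilon\to 0}\epsilon\log\sigma_{\epsilon}(G_{\mu})\leq -\mathcal I(\mu)+\delta\leq -\alpha+\delta.$$
Since $\mathcal M(K)$ is compact, $\bar\Gamma$ is compact and the open cover $\{G_{\mu}\}_{\mu\in\bar\Gamma}$ has a finite subcover $G_{\mu_{1}},\dots,G_{\mu_{k}}$. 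Subadditivity gives $\sigma_{\epsilon}(\Gamma)\leq\sigma_{\epsilon}(\bar\Gamma)\leq k\max_{i}\sigma_{\epsilon}(G_{\mu_{i}})$, so
$$\limsup_{\epsilon\to 0}\epsilon\log\sigma_{\epsilon}(\Gamma)\leq\max_{i}\limsup_{\epsilon\to 0}\epsilon\log\sigma_{\epsilon}(G_{\mu_{i}})\leq -\alpha+\delta,$$
and letting $\delta\downarrow 0$ gives (\ref{highb}).

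It remains to check that $\mathcal I$ is a good rate function. Lower semicontinuity is immediate from the definition: if $\mathcal I(\mu_{0})>c$, then the infimum defining $\mathcal I(\mu_{0})$ provides some $G\in\mathcal B$ containing $\mu_{0}$ with $\liminf\epsilon\log\sigma_{\epsilon}(G)<-c$, and the same $G$ then witnesses $\mathcal I(\nu)>c$ for every $\nu\in G$, so $\{\mathcal I\leq c\}$ is closed --- hence compact in the compact space $\mathcal M(K)$. I expect the main obstacle to be the upper bound argument: one has to upgrade a family of pointwise estimates on base neighbourhoods to a uniform estimate on $\bar\Gamma$. This is exactly where compactness of $\mathcal M(K)$ combined with the stronger $\limsup$-side hypothesis is essential --- without equality of the two infima (the one in the definition and the one involving $\limsup$), the same reasoning only yields a weak LDP rather than the full LDP with rate $\mathcal I$.
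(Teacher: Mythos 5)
Your proof is correct and complete. The paper itself offers no written argument for this proposition: it simply cites Theorem 4.1.11 of Dembo--Zeitouni \cite{DZ}, which is the general result that matching $\liminf$ and $\limsup$ infima over a base yields a \emph{weak} LDP, and then leaves implicit the upgrade to a full LDP with good rate function via compactness of $\mathcal M(K)$. Your write-out reconstructs exactly that route: the lower bound via a single base neighbourhood contained in $\Gamma^{0}$; the upper bound via the $\limsup$-side hypothesis together with a finite subcover of $\bar\Gamma$ (valid since $\bar\Gamma$ is compact in the compact space $\mathcal M(K)$); lower semicontinuity of $\mathcal I$ from the definition, and compactness of sublevel sets again from compactness of $\mathcal M(K)$. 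The handling of the $\alpha=+\infty$ case and of individual points with $\mathcal I(\mu)=+\infty$ is done correctly. In short, you have reproved the cited Dembo--Zeitouni result, specialized to a compact ambient space where weak LDP and full LDP coincide automatically; this is the intended content behind the paper's one-line citation.
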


Following section 7 of \cite{LPLD}, Theorem \ref{rel-J-E} and Proposition \ref{dzprop1} immediately yield a large deviation principle:

\begin{theorem} \label{ldp} Assume $\nu$ is a strong Bernstein-Markov measure for $K$ and $Q\in C(K)$. The sequence $\{\sigma_n=(j_n)_*(Prob_n)\}$ of probability measures on $\mathcal M(K)$ satisfies a large deviation principle with speed $n^{2}$ and good rate function $\mathcal I:=\mathcal I_{K,Q}$ where, for $\mu \in \mathcal M(K)$,
\begin{equation*}%\label{ratefcnlform}
\mathcal I(\mu):=\log J^Q(\mu_{K,Q})-\log J^Q(\mu)=I^Q(\mu)-I^Q(\mu_{K,Q}).
\end{equation*}
%see (\ref{w=j=i}).
 \end{theorem}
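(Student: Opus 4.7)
The plan is to combine the identification of the $J^{Q}$-functional with $-I^{Q}$ in Theorem \ref{rel-J-E} and the free-energy asymptotic of Proposition \ref{znasymp}, and then invoke the abstract criterion of Proposition \ref{dzprop1}. Comparing (\ref{sigmak}) with (\ref{jkqmu}), for every open $G\subset\mathcal M(K)$ one has
$$\sigma_n(G)=\frac{1}{Z_n}\bigl(J_n^{Q}(G)\bigr)^{n^{2}},\qquad\text{so}\qquad \frac{1}{n^{2}}\log\sigma_n(G)=\log J_n^{Q}(G)-\frac{1}{n^{2}}\log Z_n.$$
Since Proposition \ref{znasymp} gives $\tfrac{1}{n^{2}}\log Z_n\to -I^{Q}(\mu_{K,Q})$, I would pass to $\liminf$ and $\limsup$ in $n$ to obtain
$$\liminf_{n\to\infty}\frac{1}{n^{2}}\log\sigma_n(G)=\log\underline J^{Q}(G)+I^{Q}(\mu_{K,Q}),\qquad \limsup_{n\to\infty}\frac{1}{n^{2}}\log\sigma_n(G)=\log\overline J^{Q}(G)+I^{Q}(\mu_{K,Q}).$$

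Next, taking the infimum over open neighborhoods $G$ of a fixed $\mu\in\mathcal M(K)$ and invoking Theorem \ref{rel-J-E}, both local rates reduce to $I^{Q}(\mu_{K,Q})-I^{Q}(\mu)=-\mathcal I(\mu)$. Proposition \ref{dzprop1}, applied with the base of weak-topology open sets in $\mathcal M(K)$ and the scale $\epsilon_n=1/n^{2}$, then produces the LDP in the form of Definition \ref{equivform}, with speed $n^{2}$ and rate function $\mathcal I$.

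It remains to verify that $\mathcal I$ is a \emph{good} rate function. Since $K$ is compact, $\mathcal M(K)$ is itself compact in the weak topology, so it suffices to prove lower semicontinuity of $\mathcal I$, which reduces to lower semicontinuity of $I^{Q}$ on $\mathcal M(K)$. The external-field piece $\mu\mapsto\int Q\,d\mu$ is weakly continuous because $Q\in C(K)$, and lower semicontinuity of $\mu\mapsto I(\mu)$ follows from the monotone-truncation argument already used in the proof of Theorem \ref{sec3}: the bounded continuous kernels $h_M(x,y)=\min[M,|x-y|^{-\alpha}]$ increase pointwise to $|x-y|^{-\alpha}$, so $I$ is a supremum of weakly continuous functionals. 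The substantive content has been absorbed into Theorem \ref{rel-J-E} and Proposition \ref{znasymp}, so no serious obstacle remains; the only minor technicality is that Proposition \ref{dzprop1} is phrased for a continuous parameter $\epsilon\to 0$, whereas we need the discrete analogue with $\epsilon_n=1/n^{2}$, but this is a standard variant (cf.\ Theorem 4.1.11 in \cite{DZ}).
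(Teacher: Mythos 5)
Your proposal is correct and follows essentially the same route as the paper, which (tersely) cites Section 7 of \cite{LPLD} and combines Theorem~\ref{rel-J-E} with Proposition~\ref{dzprop1}; you simply spell out the intermediate steps — the factorization $\sigma_n(G)=Z_n^{-1}\bigl(J_n^Q(G)\bigr)^{n^2}$, the free-energy limit $\tfrac{1}{n^2}\log Z_n\to -I^Q(\mu_{K,Q})$ from Proposition~\ref{znasymp}, and the identification $\log J^Q(\mu)=-I^Q(\mu)$ — and then correctly note that goodness is automatic from compactness of $\mathcal M(K)$ together with lower semicontinuity of the rate function.
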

 
 %\noindent Intuitively, this says the following. Given any $\mu\in \mathcal M(K)$ with $\mu \not = \mu_{K,Q}$, we know the probability that a random array $X=\{x_j^{(k)}\}_{j=1,...,n; \ n=2,3,...}\in \chi$ has the property that 
% $\frac{1}{n}\sum_{j=1}^n\delta_{x_j^{(k)}}\to \mu$ is zero; the ``rate'' at which the probabilty that this sequence lies in small neighborhoods of $\mu$ tends to zero as $n\to \infty$ like $\exp{\bigl[-n^{2}\cdot \mathcal I(\mu)\bigr]}$. 

\section{Open problems} We conclude with some questions which are motivated by logarithmic weighted potential theory as in \cite{ST}.
\begin{enumerate}
\item Where does the supremum norm of a function $f_n^Q\in  \mathcal P_n^Q$ (see (\ref{pnq})) live? More precisely, can one control the growth of $f_n^Q$ from its size on $S_w=$supp$(\mu_{K,Q})$ and/or on $S_w^*:=\{x\in K: U^{\mu_{K,Q}}(x)+Q(x)  \leq F_w\}$? 
\item Let $K$ be a compact set of positive Riesz capacity. Does there exist a positive measure $\mu$ on $K$ of finite total mass satisfying (\ref{BMQ}) for $Q=0$? Can such a $\mu$ be discrete? See \cite{DRNA} for related results.
\item Generalize the results in this paper to the case where $K\subset \R^d$ is allowed to be a closed, unbounded set for appropriate weights $Q$ (\cite{CGZ} considers $K=\R^d$).

\end{enumerate}

{\obeylines
\texttt{T. Bloom, bloom@math.toronto.edu
University of Toronto, Toronto, Ontario M5S 2E4 Canada
\medskip
N. Levenberg, nlevenbe@indiana.edu
Indiana University, Bloomington, IN 47405 USA
\medskip
F. Wielonsky, wielonsky@cmi.univ-mrs.fr
Universit\'e Aix-Marseille, CMI 39 Rue Joliot Curie
F-13453 Marseille Cedex 20, FRANCE }
}

\end{document}